\newtheorem{theorem}{Theorem}[section]
\newtheorem{lemma}[theorem]{Lemma}
\newtheorem{Main}[theorem]{Main Theorem}
\newtheorem*{Pappus*}{The Pappus Theorem}
\newtheorem*{Brianchon*}{The Brianchon Theorem}
\newtheorem{GrafSauer}[theorem]{The Graf--Sauer Theorem}
\newtheorem{VolkStrub}[theorem]{The Volk--Strubecker Theorem}
\newtheorem{Wunder}[theorem]{The Wunderlich Theorem}
\newtheorem{wund1}[theorem]{The Wunderlich Theorem}
\newtheorem*{Blaschke*}{The Blaschke--Bol Problem}
\theoremstyle{definition}
\newtheorem*{definition*}{Definition}
\newtheorem{problem}[theorem]{Problem}
\theoremstyle{remark}
\begin{document}

\Large

\title{New examples of hexagonal webs of circles}

\author{Fedor Nilov}
\address{Moscow State University}
\email{nilovfk@gmail.com}

\thanks{The author was supported in part by President of the Russian Federation grant MK-3965.2012.1 and by RFBR grant 13-01-12449.}

\begin{abstract} We give several new examples of hexagonal $3$-webs of circles in the plane and
%discuss
give a survey on such webs. %We also discuss hexagonal webs from circles on Darboux cyclides.
%We also prove that a surface with 4 real families of circles is a cyclide.
%and give a short proof of the Takeuchi result that a surface with 7 real families of circles is a sphere.

\smallskip

\noindent{\bf Keywords}: webs, web of circles, conic, cyclic, hexagonal closure condition, pencil of circles.

%\noindent{\bf 2010 MSC}: 14J26, 51M04.
\end{abstract}

\maketitle

\Large

\section{Introduction}\label{sec:introduction}

There are many theorems in elementary geometry, in which some natural construction closures with period $6$;
e.g., see \cite[p. 104--107]{Akop}. For example, the famous Pappus and Brianchon theorems can be stated in this way; we give these statements a bit later.
It turns out that such closure theorems come from a beautiful general theory called \emph{web geometry}. It was founded by W. Blaschke and his collaborators
in 1920s; see a nice introduction and references in \cite[\S 18]{Fuchs}. Since then many interesting results have been obtained in the area (see references in \cite{Pottmann}) but many natural questions remained open. In this paper we give several new examples of webs of circles, which is an advance in one of such open questions.

\begin{figure}[hb]
\begin{overpic}[width=0.26\textwidth]{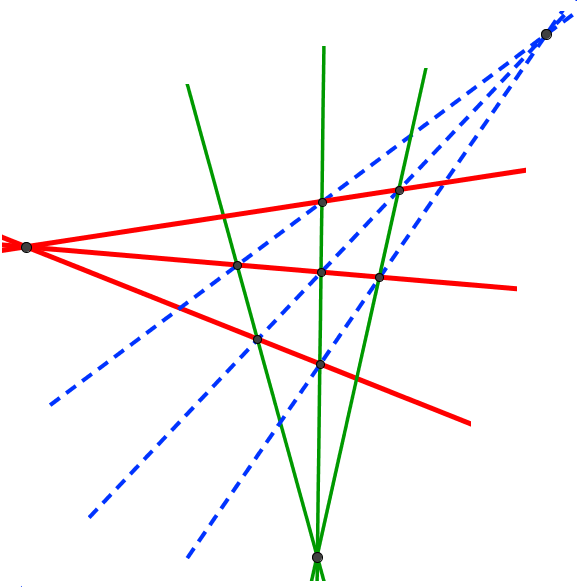}
\put(56, 2){$G$}
\put(2, 48) {$R$}
\put(94, 88) {$B$}
\put(44, 68) {$A_3$}
%\put(50, 60) {$A_3$}
\put(51, 44) {\color{white}\vrule width .22cm height .35cm }
\put(51, 45) {$O$}
\put(65, 45) {$A_1=A_7$}
\put(69, 60) {\color{white}\vrule width .22cm height .3cm }
\put(68, 61) {$A_2$}
\put(52, 29) {\color{white}\vrule width .25cm height .3cm }
\put(49, 29) {$A_6$}
\put(35, 33) {\color{white}\vrule width .22cm height .3cm }
\put(35, 33) {$A_5$}
\put(32, 47) {\color{white}\vrule width .22cm height .3cm }
\put(32, 47) {$A_4$}
\end{overpic}
\qquad
\begin{overpic}[width=0.31\textwidth]{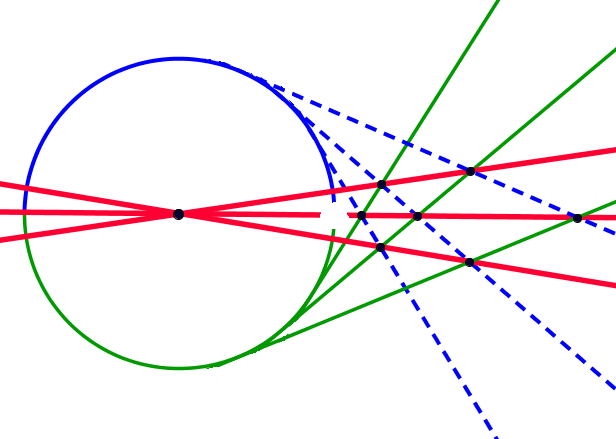}
\put(71, 32) {\color{white}\vrule width .29cm height .35cm }
\put(70, 33){$O$}
\put(29, 40) {$I$}
\put(87, 29) {$A_1=A_7$}
\put(71, 22) {$A_2$}
\put(55, 24) {\color{white}\vrule width .22cm height .35cm }
\put(54, 25) {$A_3$}
\put(48, 34) {\color{white}\vrule width .22cm height .35cm }
\put(47, 35) {$A_4$}
\put(54, 44) {\color{white}\vrule width .22cm height .35cm }
\put(53, 45) {$A_5$}
\put(70, 46) {\color{white}\vrule width .22cm height .35cm }
\put(69, 47) {$A_6$}
\end{overpic}
\qquad
\begin{overpic}[width=0.26\textwidth]{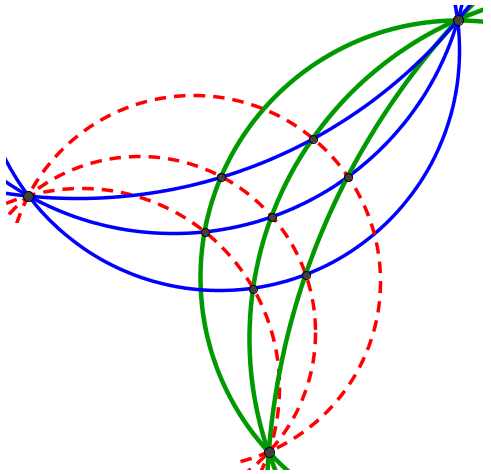}
%\put(70, 33){$O$}
%\put(29, 40) {$I$}
\put(73, 58) {$A_1=A_7$}
\put(63, 35) {$A_2$}
\put(41, 31) {$A_3$}
\put(31, 44) {\color{white}\vrule width .22cm height .35cm }
\put(30, 45) {$A_4$}
\put(36, 62) {\color{white}\vrule width .22cm height .35cm }
\put(35, 63) {$A_5$}
\put(57, 46) {\color{white}\vrule width .22cm height .3cm }
\put(55, 46) {$O$}
\put(56, 61) {\color{white}\vrule width .3cm height .35cm }
\put(55, 62) {$A_6$}
\end{overpic}
\\
\includegraphics[width=0.25\textwidth]{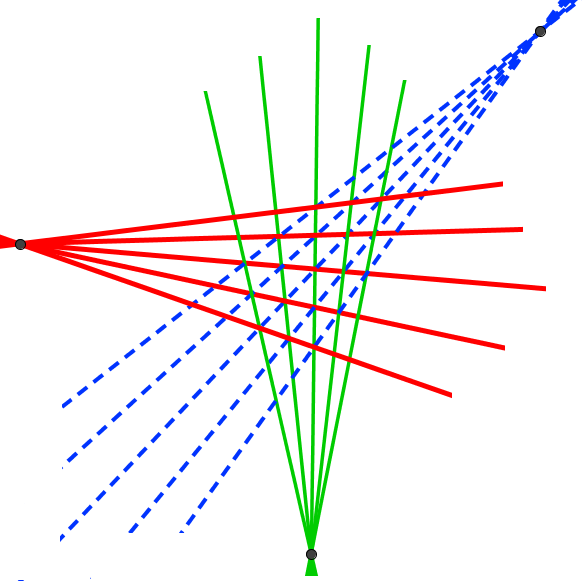}
\qquad
\includegraphics[width=0.25\textwidth]{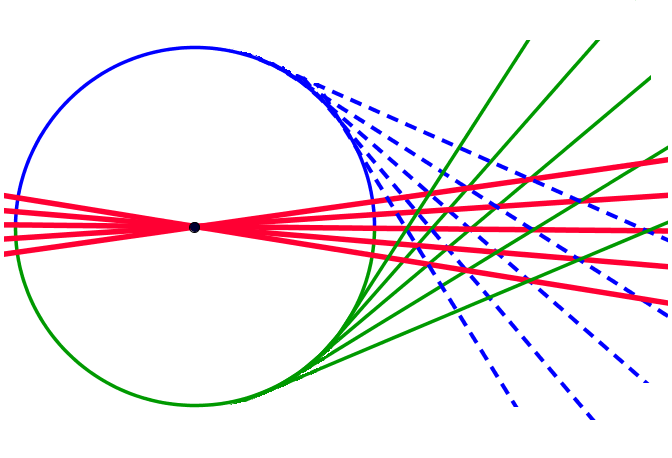}
\qquad
\includegraphics[width=0.25\textwidth]{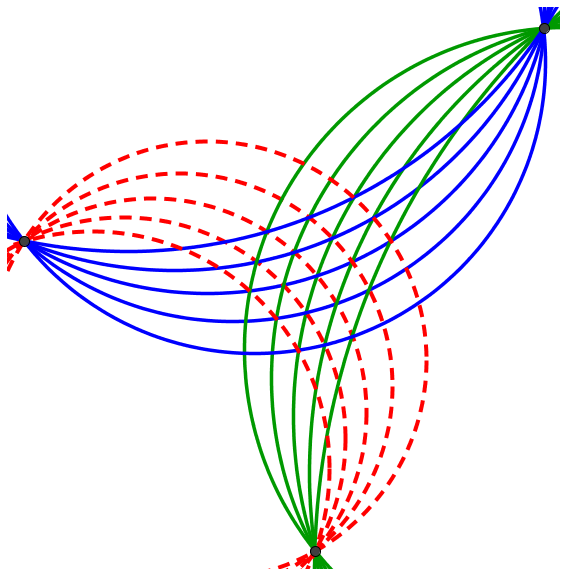}
\caption{Top: Left: \emph{The Pappus theorem.} Middle: \emph{The Brianchon theorem.} Right: \emph{The Blaschke theorem.} Bottom: Left: \emph{The Pappus web} formed by three pencils of lines. Middle: \emph{The Brianchon web} formed by the set of tangent lines to a circle counted twice and a pencil of lines.
Right: \emph{The Blaschke web} formed by three elliptic pencils of circles with the vertices $(R,G)$, $(G,B)$, and $(B,G)$ (see the definition of pencils in \S 1).}
%Left: The Pappus Theorem. Right: The Brianchon Theorem.}
\label{fig1}
\end{figure}

Let us give the statements of the Pappus and Brianchon theorems as closure theorems.

\begin{Pappus*}
A red ($R$), a green ($G$), and a blue ($B$) points are marked in the plane (see Figure~\textup{\ref{fig1}} to the left). Each line passing through exactly one of the marked points is painted the same color as the point.  Take an arbitrary point $O$ inside the triangle $RGB$. Draw the red, the green, and the blue line through the point. On the red line take an arbitrary point $A_1$ inside the triangle $RGB$.
Draw the green line through the point $A_1$. Suppose that the green line intersects the blue line through the point $O$ at a point $A_2$. The green and the blue line through the point $A_2$ have already been drawn; draw the red line through $A_2$. The intersection point of the obtained red line with the green line through the point $O$ is denoted by $A_3$. Continuing this construction we get the points $A_4,A_5,A_6,A_7$. Then  $A_7=A_1$.
\end{Pappus*}

%The condition $A_7=A_1$ in this theorem is called
%the \emph{hexagonal closure condition}.

\begin{Brianchon*}
A circle with a point $I$ inside and a point $O$ outside are given; see Figure~\textup{\ref{fig1}} to the middle.
The lines passing through  $I$ are painted red. The rays starting at the points of this circle, tangent to it, and
looking clockwise or counterclockwise are painted green or blue, respectively. %Then the hexagonal closure condition holds.
Construct the points $A_1\ldots A_7$ as in the Pappus Theorem. Then $A_7=A_1$.
%For each tangent line to the circle the ray containing in the line with vertex at the tangency point
%The tangent lines to the circle are paint either green or blue depending on
%the position of their common point with the circle with respect to the line $OI$
%On the line $OI$ take an arbitrary point $A_1$.
%Draw the green ray through the point. Suppose that the green ray intersects the blue ray through the point $O$ at a point $A_2$. The green and the blue ray through the point $A_2$ have already been drawn; now draw the red line. The intersection point of the obtained red line with the green ray through the point $O$ is denoted by $A_3$. Continuing this construction we get the points $A_4,A_5,A_6,A_7$. Then $A_7=A_1$.
\end{Brianchon*}

In this paper we consider a general construction generating theorems of this kind; e.g., see Figure~\textup{\ref{fig1}} to the right.
% If instead of the lines some circular arcs are paint
%red, green, and blue then the \emph{hexagonal closure condition} is defined analogously. Hereafter by a \emph{circle} we mean %a
%either a round circle or a straight line; %Obviously, the image of a $3$-web of circles under any \emph{M\"obius transformation} is a $3$-web of circles.
In what follows by a \emph{circular arc} we mean either a circular arc or a line segment, or a ray, or a line. We assume that all circular arcs do not contain
their endpoints.

%Consider an arbitrary point $O\in \Omega$. Let $\alpha_1$, $\beta_1$, and $\gamma_1$ be the red, green, and blue circular arcs
%passing through $O$, respectively. Consider an arbitrary point $A_1\in\alpha_1$. %sufficiently close to $O$.
%Let $\beta_2$ and $\gamma_2$ be the green and blue circular arcs, respectively, passing through $A_1$. Let
%$A_2$ be an intersection point of $\beta_2$ and $\gamma_1$. Consider the red circular arc $\alpha_2$ passing through
%$A_2$. Let $A_3$ be an intersection point of $\alpha_2$ and $\beta_1$.
%Consider the curve $\beta_3$ of the second family passing through $A_3$. Let $A_4$ be an intersection point of $\beta_3$ and $\alpha_1$.
%Analogously, we define points $A_4$, $A_5$, $A_6$, and
%$A_7$. The \emph{hexagonal closure condition} asserts that if all the above points exist, then $A_7=A_1$.

Suppose that some circular arcs contained in a domain $\Omega\subset\mathbb{R}^2$ with the endpoints contained in the boundary of
$\Omega$
are painted red, green, and blue. We say that they have \emph{hexagonal property},
if the following $2$ conditions hold (see figures to the left):

\noindent
\begin{tabular}{p{0.2\textwidth}p{0.75\textwidth}}
\vspace{0.0cm}
%\begin{figure}
%\centering
\centering
\begin{overpic}[width=0.12\textwidth]{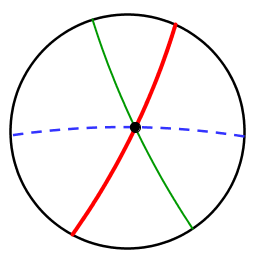}
\put(14,24) {$\Omega$}
\put(58,41) {\color{white}\vrule width .22cm height .35cm }
\put(57,42) {A}
\end{overpic} \\
\begin{overpic}[width=0.21\textwidth]{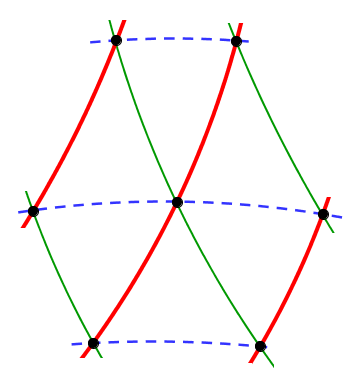}
\put(40,38) {\color{white}\vrule width .36cm height .35cm }
\put(40,38) {$O$}
\put(47,58) {\color{white}\vrule width .36cm height .3cm }
\put(48,59) {\color{red}$\alpha_1$}
\put(56,44) {\color{white}\vrule width .36cm height .3cm }
\put(56,44) {\color{blue}$\gamma_1$}
\put(48,26) {\color{white}\vrule width .36cm height .4cm }
\put(48,28) {\color{green}$\beta_1$}
\put(66,62) {\color{white}\vrule width .36cm height .4cm }
\put(66,64) {\color{green}$\beta_2$}
\put(40,87) {\color{white}\vrule width .36cm height .3cm }
\put(40,88) {\color{blue}$\gamma_2$}
\put(68,21) {\color{white}\vrule width .36cm height .3cm }
\put(68,22) {\color{red}$\alpha_2$}
\put(50,76) {\color{white}\vrule width .6cm height .36cm }
\put(52,77) {$A_1=A_7$}
\put(71,34) {\color{white}\vrule width .36cm height .30cm }
\put(71,36) {$A_2$}
\put(51,9) {\color{white}\vrule width .43cm height .36cm }
\put(51,9) {$A_3$}
\put(35,9) {\color{white}\vrule width .36cm height .3cm }
\put(35,9) {\color{blue}$\gamma_3$}
\put(9,6) {\color{white}\vrule width .5cm height .35cm }
\put(9,9) {$A_4$}
\put(13,25) {\color{white}\vrule width .36cm height .3cm }
\put(13,27) {\color{green}$\beta_3$}
\put(0,34) {\color{white}\vrule width .5cm height .35cm }
\put(0,37) {$A_5$}
\put(15,58) {\color{white}\vrule width .5cm height .35cm }
\put(15,60) {\color{red}$\alpha_3$}
\put(20,76) {\color{white}\vrule width .4cm height .35cm }
\put(20,78) {$A_6$}
\end{overpic}
%\caption{To the proof of Theorem~\ref{newexamples}(a).}
%\label{fig2}
%\end{figure}
%\begin{figure}
%\qquad\includegraphics[width=0.15\textwidth]{fig5.png}
%\caption{j}
%\end{figure}
&
%\vspace{-0.7cm}
%Suppose that some lines in the plane are paint red, green, and blue. The paint lines form a (\emph{hexagonal}) \emph{web}
%if there is a disk $\Omega$ satisfying the following 2 conditions:
%\vspace{0.2cm}

\begin{itemize}
\item \textbf{Foliation condition:}
For each point $A\in \Omega$ there is exactly one circular arc of each color passing through $A$. The arcs of distinct colors either are disjoint or intersect once transversely.
\vspace{0.0cm}
\item
\textbf{Closure condition:}
Consider an arbitrary point $O\in \Omega$. Let $\alpha_1$, $\beta_1$, and $\gamma_1$ be the red, green, and blue circular arcs
passing through $O$, respectively. Consider an arbitrary point $A_1\in\alpha_1$. %sufficiently close to $O$.
Let $\beta_2$ and $\gamma_2$ be the green and blue circular arcs, respectively, passing through $A_1$. Let
$A_2$ be the intersection point of $\beta_2$ and $\gamma_1$. Consider the red circular arc $\alpha_2$ passing through
$A_2$. Let $A_3$ be the intersection point of $\alpha_2$ and $\beta_1$.
%Consider the curve $\beta_3$ of the second family passing through $A_3$. Let $A_4$ be an intersection point of $\beta_3$ and $\alpha_1$.
Analogously define the points $A_4$, $A_5$, $A_6$, and
$A_7$. The \emph{hexagonal closure condition} asserts that if all the above points exist, then $A_7=A_1$.
%Take an arbitrary point $O$ of  the disk $\Omega$. Draw the red line $k_1$, the green line $k_2$, and the blue line $k_3$ through the point $O$. Take an arbitrary point $A_1\in k_1$ sufficiently close to~$O$. Draw the green line $l_2$ through the point $A_1$. Set $A_2=l_2\cap k_3$. Draw the line $l_1$ of the remaining colour (red) through the point $A_2$. Set $A_3=l_1\cap k_2$. Draw the line $l_3$ of the remaining colour (now blue) through~$A_3$ and set $A_4=l_3\cap k_1$. Continuing in this way, construct points $A_5$, $A_6$, and $A_7$. The \emph{closure condition} asserts that $A_7=A_1$.
\end{itemize}
\end{tabular}

%\smallskip

A trivial example of lines having the hexagonal property is
the lines parallel to the sides of a fixed triangle painted red, green, and blue.
Now if each of the lines intersects a domain $\Omega\subset\mathbb{R}^2$ by at most one line segment
then these segments have the hexagonal closure property. If a real analytic diffeomorphism $f\colon\Omega\rightarrow\Omega'\subset\mathbb{R}^2$ maps
these segments to circular arcs (painted the same color) then the circular arcs
have the hexagonal property as well. This is a motivation for the following definition.

\begin{figure}[hb]
\centering
\begin{overpic}[width=0.4\textwidth]{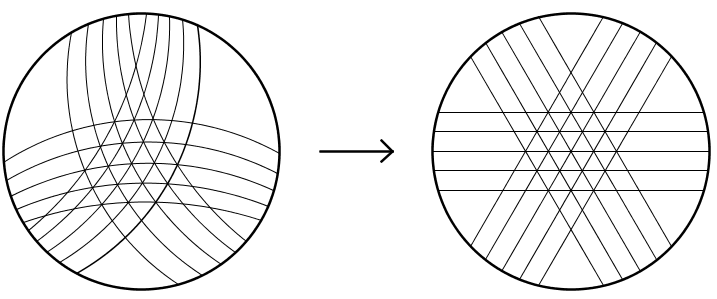}
\put(48, 24) {$f$}
\put(35,35) {$\Omega$}
\put(95,35) {$\Omega'$}
\end{overpic}
\caption{A definition of a hexagonal $3$-web.}
\label{fig2}
\end{figure}

%\begin{figure}[hb]
%\includegraphics[width=0.18\textwidth]{Figure1A.png}
%\qquad
%\begin{overpic}[width=0.25\textwidth]{}
%\put(40,38) {\color{white}\vrule width .36cm height .35cm }
%\put(40,38) {$O$}
%\put(47,58) {\color{white}\vrule width .36cm height .3cm }
%\includegraphics[width=0.3\textwidth]{}
%\caption{}
%\end{overpic}
%\label{fig2}
%\end{figure}

%W. Blaschke proved that each hexagonal $3$-web can be obtained by this construction.

%from line segments parallel to the sides of a fixed equilateral triangle.

%There is the following equivalent definition of a hexagonal $3$-web from circular arcs.

\begin{definition*}
%Let $(x,y)$ be Cartesian coordinates in the plane.
Three sets of circular arcs in a domain $\Omega$
is a \emph{hexagonal $3$-web of circular arcs} (or simply a \emph{web of circular arcs}), if there is
%a domain $\Omega'\subset\mathbb{R}^2$ such that
 a real analytic diffeomorphism $f\colon\Omega\rightarrow\Omega'\subset\mathbb{R}^2$ which
takes the sets of arcs to the intersections of the sets of lines
parallel to the sides of a fixed triangle
% $x=\mathrm{const}, y=\mathrm{const}$, and $x+y=\mathrm{const}$
with the domain $\Omega'$, and all the nonempty intersections of these lines with $\Omega'$ are connected; see Figure~\ref{fig2}.
\end{definition*}

W. Blaschke proved under some regularity assumptions that if some circular arcs have the hexagonal property then they is a hexagonal $3$-web.

%\newpage

%These lines for integral values of the constants form a triangular lattice. Taking the $f$-preimage of this lattice we get a \emph{discrete hexagonal $3$-web} in the plane. In the figures bellow we always depict discrete webs.
We say that three sets of circles in the plane \emph{contain a hexagonal $3$-web}, if their appropriate arcs (possibly empty) is a hexagonal $3$-web in an
appropriate domain.
%The latter web from circular arcs is called the \emph{restriction} of
%the former web of circles.
We allow two of the three sets of circles to coincide; this means that we take two disjoint arcs
from each circle of such set. %{\bf Example!!!}
In the latter case we say that the set is \emph{counted twice}; see the example in the Figure~\ref{fig3} and the Brianchon Theorem above.
 %is depicted the family of tangent lines to a circle counted twice  we take two tangent rays from each tangent line to the circle. %from each line belonging to the family of tangent lines
%to a circle counted twice we can take two disjoint rays with endpoints at the tangency point. %We
%have two families of tangent rays to the circle. So the family of tangent lines to the circle is counted twice.

\begin{figure}[htbp]
%\includegraphics[width=0.18\textwidth]{Figure1A.png}
%\qquad
\includegraphics[width=0.18\textwidth]{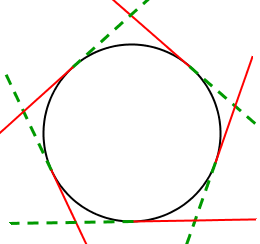}
\caption{The set of tangent lines to a circle is counted twice. We take two disjoint rays from each tangent line to the circle.}
\label{fig3}
\end{figure}

For instance, the following sets of circles %associated with a triple of points $R$, $G$, $B$
contain a hexagonal $3$-web; see \cite[p. 19--20]{Blaschke-38} and Figure~\ref{fig1} to the bottom:

%(a) Three families of lines parallel to the sides of a given equilateral triangle defined in the whole plane.
%(The corresponding diffeomorhism is an appropriate affine map).
% to the left;

(a) \emph{The Pappus web.} Three sets of lines passing through three distinct points $R$, $G$, and $B$, respectively; % defined into the triangle $ABC$.

(b) \emph{The Brianchon web.} The set of tangent lines to a conic counted twice and a set of lines passing through a fixed point;

(c) \emph{The Blaschke web.} A set of circles passing through $R$ and $G$, a set of circles passing through $G$ and $B$, and a set of circles passing through $B$ and $G$.
%defined into a circle passing through $A$, $B$, and $C$. % sharing common vertices.

%\newpage

%\begin{figure}[htbp]
%\includegraphics[width=0.19\textwidth]{fignilov2.png}
%\qquad
%\includegraphics[width=0.19\textwidth]{fig3.png}
%\qquad
%\includegraphics[width=0.19\textwidth]{fig1.png}
%\caption{%Left: $3$-web from lines parallel to the sides of an equilateral triangle.
%Left: \emph{The Pappus web.} A web formed by three pencils of lines (see definition of a pencil of lines below). Middle: \emph{The Brianchon web.} A web formed by the set of tangent lines to a circle counted twice and a pencil of lines.
%Right: \emph{The Blaschke web.} A web formed by three elliptic pencils of circles with the vertices $(R,G)$, $(G,B)$, and $(B,G)$ (see definition of a pencil of circles below).}
%\label{fig5}
%\end{figure}

Hexagonal $3$-webs from circular arcs are rare; it is always a luck to find an example. %Many known examples involve pencils
In $1938$ W. Blaschke and G. Bol stated the following problem which is still open.
% of classification of all hexagonal $3$-webs circles \cite{Blaschke}.
%In $2007$ A.M. Shelekhov could classify all such webs \cite{Shelekhov-07}. The following general problem is still-open.

\begin{Blaschke*}
(See \cite[p. 31]{Blaschke-38}.) Find all hexagonal $3$-webs from circular arcs.
\end{Blaschke*}

%In what follows, by a \emph{circle} we mean a circle in \emph{M\"obius geometry} (or \emph{generalized circle}), i.e. either a round circle or a straight line.
%Obviously, the image of a $3$-web of circles under any \emph{M\"obius transformation} is a $3$-web of circles.
%We consider all $3$-webs of circles up to \emph{M\"obius transformations}.

Let us outline the state of the art; we give precise statements of all known results on such webs in Section~\ref{sec:knownexamples}.
All hexagonal $3$-webs of straight line segments were found by H. Graf and R. Sauer \cite{Blaschke-38}.
All hexagonal $3$-webs of circles belonging to one bundle were found by O. Volk %\cite{Volk}
%by analytical methods. This class has a realization as a simple projection of a web from straight lines in the plane onto the sphere found by
and K. Strubecker.
A class of webs of circles generated by a one-parameter group of M\"obius transformations was
considered by W. Wunderlich \cite{Wunderlich}. We give an elementary restatement of his result; see Theorem~\ref{moebius} below.
A highly nontrivial example  of webs of circles doubly tangent to a cyclic was found by W. Wunderlich \cite{Wunderlich}.
For many decades there were no new examples of hexagonal $3$-webs of circular arcs except several webs formed by pencils of circles by V.B. Lazareva, R.S. Balabanova, and H. Erdogan; see \cite{Shelekhov-07}.
% \cite{Wunderlich}.
%A nontrivial class of $3$-webs from circles doubly tangent to a \emph{cyclic} was found by Wunderlich \cite{Wunderlich}.
%There are a few known examples of $3$-webs from circles in the plane.
%See \cite{Graf}, \cite{Strubecker}, \cite{Wunderlich}.
Recently, A.M. Shelekhov discussed the classification of all hexagonal $3$-webs formed by pencils of circles \cite{Shelekhov-07}.
%Further generalizations concern \emph{webs from conics.} Webs from conics in the plane are discussed in \cite{Timorin}, \cite{Pottmann}.
%Surfaces containing several conics through each point were classified by Schicho (\cite{Schicho-01}).
%there is a classification of $3$-webs from circles on \emph{Darboux cyclides}.% distinct from spheres and planes.
%We shall give some new such examples.
%A \emph{generalized circle} is either a round circle or a straight line.
%Note that the image of a $3$-web from generalized circles under any \emph{M\"obius transformation} is the $3$-web from generalized circles.
%So we consider
%Evidently, a \emph{stereographic projection} determines an equivalence between
%hexagonal $3$-webs from circular arcs in the plane and hexagonal $3$-webs from circular arcs on the sphere.

Webs of circular arcs on all surfaces distinct from a plane or a sphere are classified in \cite{Pottmann}: N. Lubbes  %\cite{Lubbes}
proved that any surface in $3$-space containing $\ge 3$ circles through each point
is a so-called Darboux cyclide,
%\newpage
and the webs on the latter are classified in \cite{Pottmann}; see Figure~\ref{fig14} to the left.
%\emph{Darboux cyclides} distinct from a plane or a sphere are classified in \cite{Pottmann}.
% So there is a classification of hexagonal $3$-webs from circular arcs on all surfaces distinct from a sphere or a plane.
There are many examples of webs of conics; e.g., see Figure~\ref{fig14} to the right and references in \cite{Pottmann}.

\begin{figure}[htbp]
\includegraphics[width=0.27\textwidth]{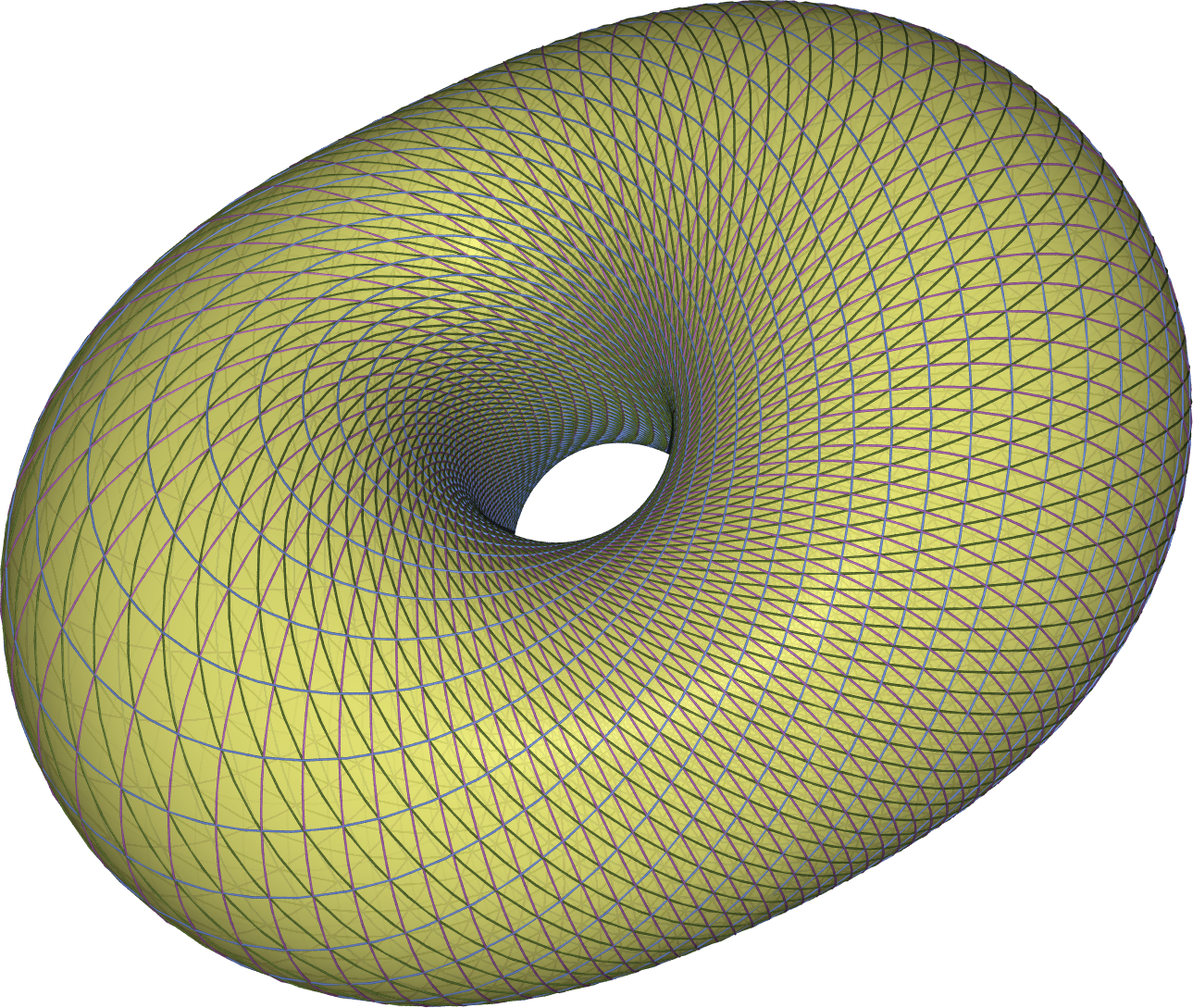}
\qquad
\includegraphics[width=0.25\textwidth]{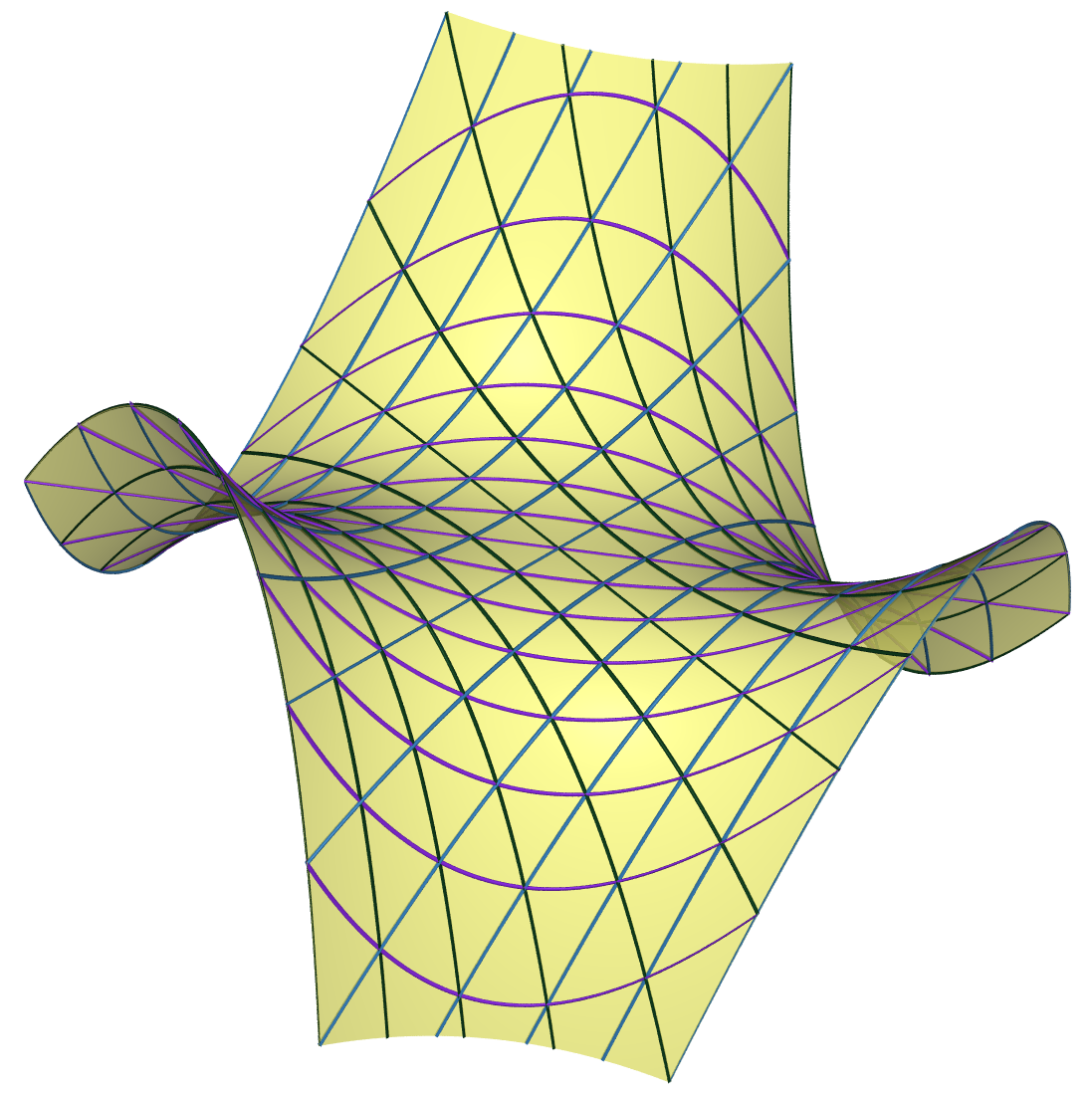}
\caption{Left: A hexagonal $3$-web of circular arcs on the Darboux cyclide \cite{Pottmann}. Right: The surface $z=xy(y-x)$ contains a web of conics,
which are in fact isotropic circles; see Problem~\ref{pr2}.}
\label{fig14}
\end{figure}

Main results of the paper are new examples of webs of circular arcs (see Main Theorem~\ref{newexamples} below). They involve pencils of lines, circles and double tangent circles of conics.

%\newpage

\begin{figure}[htbp]
%\includegraphics[width=0.18\textwidth]{Figure1A.png}
%\qquad
\includegraphics[width=0.17\textwidth]{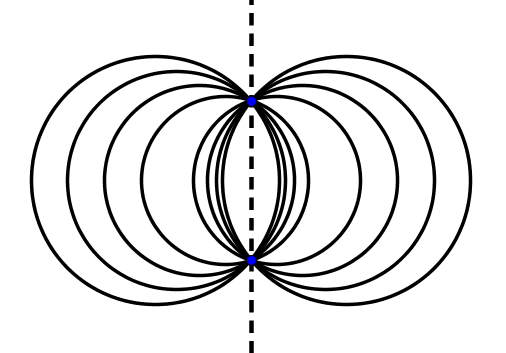}
\qquad
\includegraphics[width=0.17\textwidth]{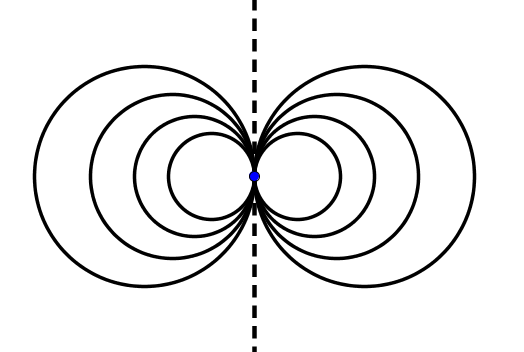}
\qquad
\includegraphics[width=0.17\textwidth]{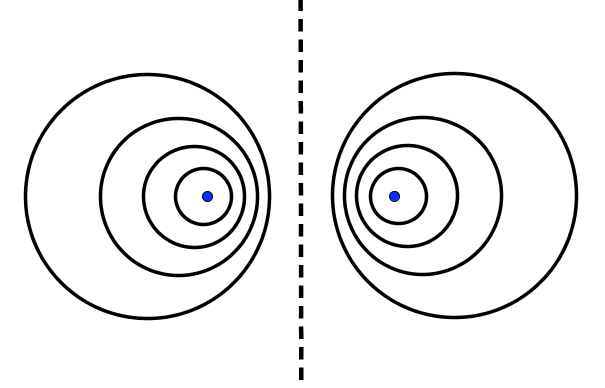}
\caption{Left: An elliptic pencil of circles. Middle: A parabolic pencil of circles. Right: A hyperbolic pencil of circles.}
\label{fig4}
\end{figure}

Let $c_1(x,y)=0$ and $c_2(x,y)=0$ be equations of degree $2$ or $1$ of two distinct circles $c_1$ and $c_2$, respectively.
A \emph{pencil of circles} is the set of all the circles having the equation of the form $\alpha c_1(x,y)+\beta c_2(x,y)=0$, where
$\alpha$ and $\beta$ are real numbers not vanishing simultaneously.
So a \emph{pencil of lines} is a set of all the lines passing through a fixed point (the \emph{vertex} of the pencil) or parallel to a fixed line.
If $c_1$ and $c_2$ are circles with two distinct common points (the \emph{vertices} of the pencil), then all the circles in the pencil pass through these points and the pencil is called \emph{elliptic}; see Figure~\ref{fig4} to the left. If the circles $c_1$ and $c_2$ are tangent (the tangency point then is called the \emph{vertex} of the pencil), then each circle in the pencil is tangent to $c_1$ and $c_2$ at the same point and the pencil is called \emph{parabolic}; see Figure~\ref{fig4} to the middle.
If the circles $c_1$ and $c_2$ have no common points, then the pencil is called \emph{hyperbolic}; see Figure~\ref{fig4} to the right. Any hyperbolic
pencil contains ``circles'' degenerating to points (bold points in the figure). They are called \emph{limiting points} of the pencil.
A pencil of intersecting (respectively, parallel) lines is considered as an elliptic (respectively, parabolic) pencil of circles.
If we take three circles $c_1(x,y)=0$, $c_2(x,y)=0$, and $c_3(x,y)=0$ not belonging to one pencil then
a \emph{bundle of circles} is the set of all the circles having the equation of the form $\alpha c_1(x,y)+\beta c_2(x,y)+\gamma c_3(x,y)=0$.
% (and even ``cubic series'').
By a \emph{general conic} we mean either an ellipse distinct from a circle or a hyperbola.

\begin{Main} The following sets of circles contain a hexagonal $3$-web of circular arcs:

%(a) Two families of tangent rays to a circle and a parabolic pencil of circles with a vertex at the center of the circle;

(a) The tangent lines to a circle counted twice and a parabolic pencil of circles with the vertex at the center of the circle;

(b) The tangent lines to a general conic counted twice and the hyperbolic pencil of circles with limiting points at the foci of the conic;

(c) The tangent lines to a general conic (counted once), a pencil of lines with the vertex at a focus of the conic, and circles doubly tangent to the conic such that their centers lie on the minor axis of the conic;

(d) The tangent lines to a parabola counted twice and a hyperbolic pencil of circles with limiting points at the focus and
an arbitrary point on the directrix;

(e) The circles doubly tangent to an ellipse with the
eccentricity $\frac{1}{\sqrt{2}}$ counted twice such that their centers lie on the major axis of the ellipse and the elliptic pencil of circles with vertices at the foci of the ellipse.

%%(f) The circles $(1-t^3)(x^2+y^2)+2(1+t)x+2(t^2+t^3)y-1-t^3=0$, where $t\in\mathbb{R}$, counted triply.
\label{newexamples}
\end{Main}

\begin{figure}[htbp]
\includegraphics[width=0.7\textwidth]{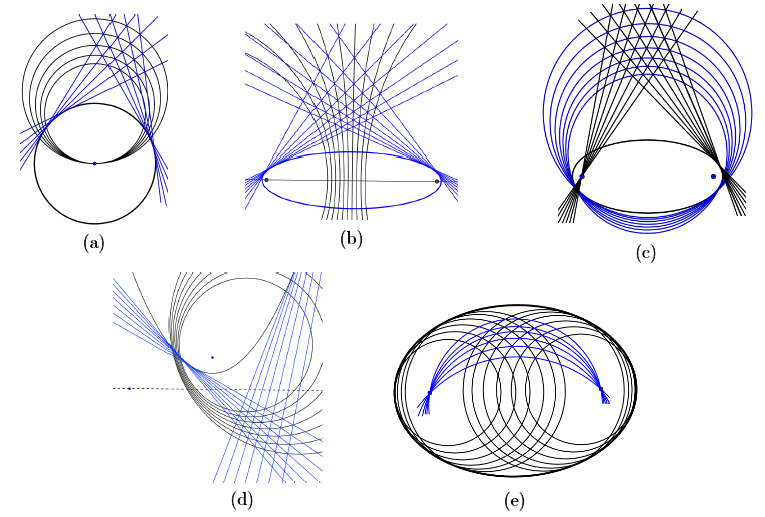} \qquad
\caption{New examples of webs of circular arcs in the plane.
%Left: A pencil of circles with limiting points at the foci of an ellipse and two families of the tangent lines to the ellipse form a regular $3$-web.
%Middle: A pencil of circles with vertices at the foci of an ellipse and two families of the tangent lines to the ellipse form a regular $3$-web. Right: A parabolic pencil of circles with vertex at the center of a circle and two families of the tangent lines to the circle form a regular $3$-web.
%Middle: A pencil of lines with vertex at the focus of an ellipse, the tangent lines of the ellipse and
%double tangent circles to the ellipse
%Right: The tangent lines of a parabola and a pencil of circles with limiting points at the focus and
%an arbitrary point on the directrix
}
\label{fig7}
\end{figure}

%We prove Theorem~\ref{newexamples} using \emph{hexagonal property} of $3$-webs.
%The proof of Theorem~\ref{newexamples}(a) and (b) is
%purely synthetical.

%The paper is organized as follows. %In Section~\ref{sec:preliminaries} we recall the notions of  .
%In Section~\ref{sec:knownexamples} we discuss all known examples of $3$-webs from circles. In Section~\ref{sec:newexamples}
%\newpage

The proof of Main Theorem~\ref{newexamples} is elementary and uses only elementary properties of conics.

The paper is organized as follows.
In Section~\ref{sec:knownexamples} we give a survey on webs of circular arcs in the plane. In Section~\ref{sec:newexamples} we prove Main Theorem~\ref{newexamples}.
In Section~\ref{sec:open} we state some open problems.

\section{Known examples}\label{sec:knownexamples}

%{\bf concrete reference to all examples!}
%\subsection{Webs from circles in the plane}\label{sec:plane}

%Evidently, the image of a $3$-web from circles under any \emph{M\"obius transformation} is a
%$3$-web from circles. Thus we shall consider each $3$-web from circles up to the group of \emph{M\"obius transformations}.
%are invariant under \emph{circular transformations}  which maps each circle to a circle.

%In what follows by a \emph{circle} we mean a \emph{generalized circle}, i.e. a round circle, or a straight line,
%or a degenerate point circle.

Let us give precise statements of all known examples of hexagonal $3$-webs of circular arcs in the plane.
Three sets of circles are \emph{transversal}, if for each point from some domain we can find
three circles from distinct sets intersecting transversely at the point. Although we always consider  webs  of \emph{real} circular arcs, in Theorems \ref{moebius}, \ref{wunderlich}, and \ref{shelekhov} we
use some auxiliary \emph{complex} points or circles (we skip their formal definition because it is not used in the proof of main results).
%which have a
%point of transversal intersection.

%Note that any one-parametric group of M\"obius transformations such that
%each orbit is a circle is

%Let us parameterize each line with equation $px+qy+r=0$ by the point $(p:q:r)\in\mathbb{RP^2}$. We say that all the
%lines ${px+qy+r=0}$ are \emph{tangent to a curve of class $3$} iff there is a homogeneous polynomial $P(p,q,r)$ of degree $3$ such that $P(p,q,r)=0$ for each line $px+qy+r=0$. Evidently, a curve of class $3$ is a dual curve of a degree $3$ curve.
%Note that if polynomial $P=L_1\cdot L_2 \cdot L_3$, then the lines

%{\bf Shorten!}
%Let $(x,y)$ be Cartesian coordinates in the plane.
Let $F(a,b,c)$ be a homogeneous polynomial of degree $3$. %with no multiple divisors. Consider a
The set of lines $ax+by+c=0$ such that $F(a,b,c)=0$ is called a \emph{set of lines tangent to a curve of class $3$}.
%There are the following three possible cases: a) $F$ is an irreducible polynomial;
%b) $F=Q\cdot L$, where $Q$ is an irreducible homogeneous polynomial of degree $2$ and $L$ is a homogeneous polynomial of degree $1$. In this case the set $\mathcal{F}$ consists of the set of lines tangent to a conic and a pencil of lines; c) $F=L_1\cdot L_2\cdot L_3$, where $L_i$ are homogeneous polynomials of degree $1$. In this case the set $\mathcal{F}$ consists of pencils of lines (counted with an appropriate multiplicity, if some of the polynomials $L_i$ are proportional).

The following theorem characterizes all hexagonal $3$-webs of straight line segments.

%consists of the set of lines tangent to a conic and a pencil of lines.

%A \emph{family of lines tangent to a curve of class $3$} is a set of lines $ax+by+c=0$ satisfying $F(a,b,c)$=0, where $F(a,b,c)$ is an irreducible
%polynomial of degree $3$.

%curve projectively dual to a curve of degree $3$.
%\begin{theorem} \label{grafsauer} \textup{The Graf-Sauer theorem.} Three crossing families of straight lines form a $3$-web if and only if all these lines are tangent to the same curve of class $3$ (including reducible ones); see Figure~\ref{fig3} to the middle.
%\label{graf}
%\end{theorem}

\begin{GrafSauer} \textup{\cite[\S 3]{Blaschke-38}} \label{grafsauer} If the lines tangent to a curve of class $3$
counted triply are transversal then they contain a hexagonal $3$-web; see Figure~\ref{fig8} to the left.
%The following families of lines is a hexagonal $3$-web:
%(a) Three pencils of lines; see Figure~\ref{fig1} to the left.
%(b) The family of tangent lines to a conic counted twice and a pencil of lines; see Figure~\ref{fig1} to the middle.
%(c) The family of lines tangent to a curve of class $3$ counted triply; see Figure~\ref{fig3} to the left.
Conversely, any hexagonal $3$-web of line segments is contained in such set of lines. %of one of these three examples.
\label{graf}
\end{GrafSauer}

In the particular case when the polynomial $F(a,b,c)$ above is reducible we get either the Pappus or the Brianchon web.
%$\mathcal{F}$ consists of either
%athe set of tangent lines to a conic and a pencil of lines or

%Note that the \emph{closure condition} in the part 'if' of Theorem~\ref{graf} is equivalent to the \emph{Pappus configuration} for cubic curves. See \cite[\S 18.4]{Fuchs};

%\begin{figure}[hb]
%\includegraphics[width=0.18\textwidth]{Figure1A.png}
%\qquad

%\caption{}
%\label{fig5}
%\end{figure}

The following theorem characterizes all hexagonal $3$-webs of circular arcs belonging to one bundle. %Let $(x,y,z)$ be Cartesian coordinates in the space.
%Let $\mathcal{W}$ be any hexagonal $3$-web from straight line segments in the plane.
The \emph{Darboux transformation} is the composition of a central projection from a plane in space to a sphere and another central projection from the sphere to the plane such that the center of the second projection belongs to the sphere.
%{\bf It is not a well-defined map}
A.G.~Khovanskii proved that Darboux transformations are the only maps of planar domains that take all line segments to circular arcs (see \cite[p. 562]{Timorin}). % {\bf concrete Reference in Timorin!}.

\begin{VolkStrub} \label{VolkStrubecker} The image of any hexagonal $3$-web of straight line segments under a Darboux transformation is a hexagonal $3$-web of circular arcs. Conversely, any hexagonal $3$-web of circular arcs belonging to one bundle
can be obtained by this construction.
%is a restriction of this web from circles.
\label{volk}
\end{VolkStrub}

\begin{figure}[hb]
%\includegraphics[width=0.18\textwidth]{Figure1A.png}
%\qquad
\includegraphics[width=0.20\textwidth]{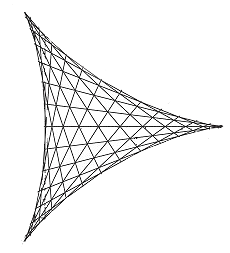} \qquad
\includegraphics[width=0.22\textwidth]{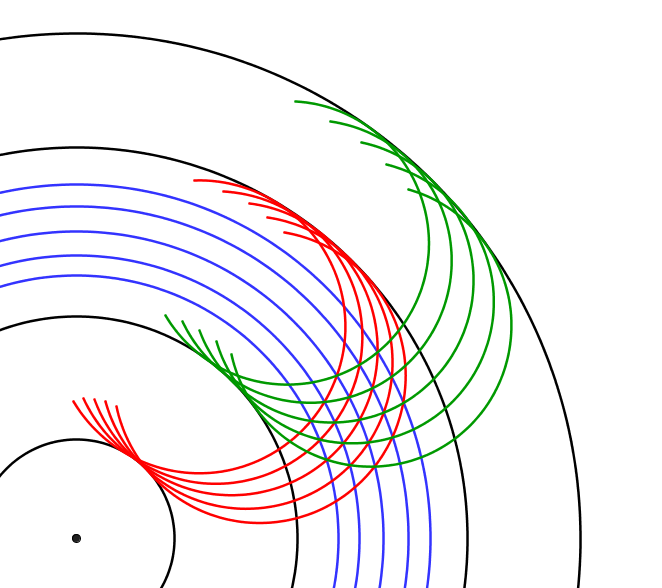} \qquad
\includegraphics[width=0.20\textwidth]{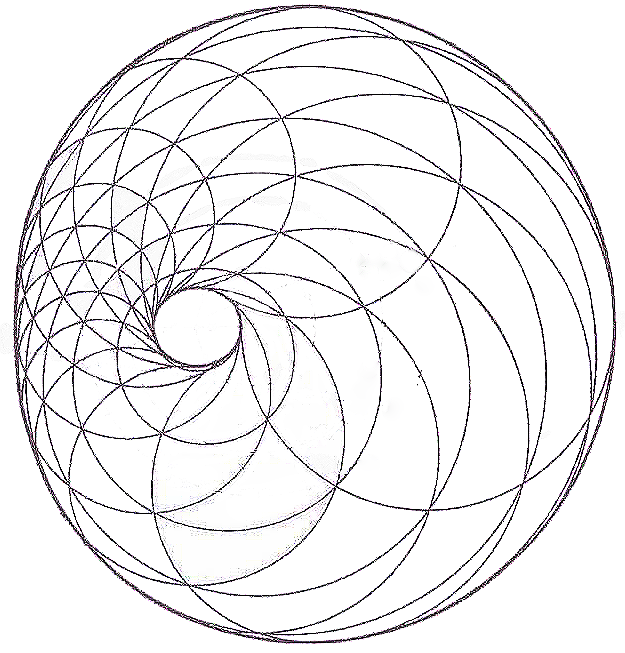}
\caption{Left: A hexagonal $3$-web of lines tangent to a deltoid which is a particular case of a curve of class $3$. Middle: Generation of a hexagonal $3$-web of circles using a one-parametric group of rotations. Right: A hexagonal $3$-web of circles doubly tangent to a cyclic.}
\label{fig8}
\end{figure}

%Three families of circles are \emph{crossing}, if there are three circles from distinct families which have a point
%of transversal intersection.

W. Wunderlich considered the following example of a hexagonal $3$-web of circular arcs.
%He took a one-parameter group of M\"obius transformations such that
%the orbits are circular arcs. Then the orbits and the images of two given circular arcs under this group is a web. Let us

\begin{wund1}
Let two circles $\omega_1$ and $\omega_2$ in the plane have a common point $O$ of transversal intersection.
%Suppose that the orbit of $O$ under $\{\mathcal{M}_{\lambda}\}$ is a circle intersecting $\omega_1$ and $\omega_2$ transversely.
%Then the families of circles $\{\mathcal{M}_{\lambda}(\omega_1)\}$, $\{\mathcal{M}_{\lambda}(\omega_2)\}$, and the family of the orbits of $\mathcal{M}_{\lambda}$ is a hexagonal $3$-web; see Figure~\ref{fig3} to the left.
Let a one-parametric group $\mathcal{M}_t$ of M\"obius transformations of the plane be such that
 each orbit is either a circle or a point. Suppose that the orbit of $O$ intersects transversely $\omega_1$ and $\omega_2$. Then the circles $\{\mathcal{M}_t(\omega_1)\}$, $\{\mathcal{M}_t(\omega_2)\}$, and the orbits of $\mathcal{M}_t$ contain a hexagonal $3$-web; see Figure~\ref{fig8} to the middle.
%and this group is a circle and the orbit containing the common point of the given circles is
%transversal with both circles.
\label{wund1}
\end{wund1}

In Section~\ref{sec:newexamples} we show
that this construction is equivalent to the following elementary one.

% because
%each such group is conjugate either to a one-parameter group of dilations, or rotations, or translations.
%Let us give a sketch of the proof of the last proposition.

%(the last one is a transformation of
%the plane $(x,y)$ identified with the $\mathbb{C}$ via $(x,y) \sim x+iy$ such that
%$z\rightarrow kz$, where $|k|\ne 1$ and $Im(k)\ne0$). So the given one-parameter group is conjugate
%to the unique one-parameter group containing

%Consider a nonidentical M\"obius transformation
%which is either dilation, or rotation, or translation, or loxodromic transformation  Then there is the unique one-parameter group of
%dilations, or rotations, or translations, or loxodromic transformations.
% Each
%Each one-parameter group of M\"obius
%transformations is determined by one nonidentity element of this group.

%each In the latter case the orbits of the transformation are not circular arcs. Since each one-parameter group of M\"obius
%transformations is determined by one nonidentity element, then the statement follows)
% {\bf Reference!}.

%Let a pencil of circles is given.
An \emph{Apollonian set} of a pencil of circles is either a set of circles tangent to two distinct (possibly complex or null) circles from the pencil, or a parabolic pencil with the vertex at a vertex of the pencil, or a hyperbolic pencil
with limiting points at the vertices of the pencil.

\begin{theorem} \label{moebius} If a pencil of circles and two Apollonian sets of this pencil
are transversal then they
contain a hexagonal $3$-web.
% of circles and t
% Let $\omega_1, \omega_2$,
%and $\omega_3,\omega_4$ be four (possibly complex) circles from one pencil.
%If the pencil, the circles tangent to both $\omega_1$ and $\omega_2$, the circles tangent to both $\omega_3$ and $\omega_4$ are three transversal sets then they contain a hexagonal $3$-web; see Figure~\ref{fig5} to the middle. {\bf This is not equivalent to Wunderlich example, some cases are forgotten}
\end{theorem}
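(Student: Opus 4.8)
The plan is to reduce Theorem~\ref{moebius} to the already-established Volk--Strubecker Theorem~\ref{volk} by exhibiting the configuration as the image of a web of straight lines under a Darboux transformation. The key observation is that a pencil of circles together with its Apollonian sets is a M\"obius-invariant notion, and M\"obius transformations are generated by Darboux transformations (or can be realized via stereographic projection onto a sphere). So the strategy is: first use a M\"obius transformation to normalize the given pencil into the simplest possible form, check what the two Apollonian sets become under this normalization, and then recognize the resulting picture as a Darboux image of three families of lines.

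Concretely, I would proceed by cases according to the type of the pencil of circles. By a suitable M\"obius transformation I would map the pencil to one of its standard models: an elliptic pencil goes to the pencil of all lines through the origin (concurrent lines), a parabolic pencil goes to a pencil of parallel lines, and a hyperbolic pencil goes to the family of concentric circles centered at a point (or to the pencil of lines through the origin via an inversion swapping the two limiting points). The next step is to track where the two Apollonian sets go. An Apollonian set of a pencil is, by definition, M\"obius-invariantly attached to the pencil (the set of circles tangent to two fixed members, or the conjugate pencil), so after normalization each Apollonian set also becomes a recognizable standard family --- a second pencil of lines, a family of concentric circles, or the like. In each case the resulting three families should be three pencils of lines, whose transversal arcs form a hexagonal web by the Pappus configuration, or should be three families manifestly obtainable from three families of lines by an inversion.

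Once all three families have been normalized, the final step is to invoke Theorem~\ref{volk}: since the three line families (possibly after a single inversion) contain a hexagonal $3$-web of straight line segments, and every M\"obius transformation is a composition of Darboux transformations (central projections through a sphere), the image under the inverse normalization is again a hexagonal $3$-web of circular arcs. The transversality hypothesis in the statement guarantees that the relevant arcs are nonempty and meet transversely, so that the diffeomorphism of the web definition is well defined on a genuine domain $\Omega$.

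The main obstacle I expect is the bookkeeping in the case analysis: verifying that \emph{each} of the three admissible types of Apollonian set (tangent to two members, parabolic with vertex at a vertex, hyperbolic with limiting points at the vertices) really does normalize to a family that, jointly with the normalized pencil, descends from three families of lines. In particular the ``tangent to two members'' case requires care, since the conjugate pencil and the Apollonian family exchange roles under inversion, and one must confirm that the three families are pairwise transversal and truly concurrent-line-like after normalization rather than accidentally degenerate. Establishing the converse direction is not asked, so I would focus solely on producing the Darboux preimage and confirming the foliation condition holds on the chosen $\Omega$.
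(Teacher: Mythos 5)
There is a genuine gap here: your reduction target is wrong. The Volk--Strubecker Theorem~\ref{volk} only produces (and, by its converse together with Khovanskii's theorem quoted just before it, can only ever produce) webs of circles lying in a single \emph{bundle}: a Darboux transformation sends the set of all lines to a bundle of circles, so three families obtained from line families in this way span at most a three-dimensional linear system of circle equations. The configuration of Theorem~\ref{moebius} does not lie in one bundle in general. For instance, normalize an elliptic pencil by a M\"obius transformation to the lines through the origin; a generic Apollonian set then consists of the circles inscribed in a fixed angle with vertex at the origin,
$$x^2+y^2-2tc_1x-2tc_2y+t^2(c_1^2+c_2^2-\rho^2)=0,$$
whose linear span already contains $x^2+y^2$, $c_1x+c_2y$, and $1$; adjoining the pencil of lines, spanned by $x$ and $y$, gives the full four-dimensional space of circles. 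So these three families are not the Darboux image of three families of lines, and no single auxiliary inversion repairs this. Your expectation that each Apollonian set normalizes to ``a second pencil of lines, a family of concentric circles, or the like'' holds only in the degenerate subcases of the definition (the conjugate hyperbolic pencil, or a parabolic pencil at a vertex), not in the main case of circles tangent to two fixed members of the pencil. This is also consistent with Table~\ref{table1}, which treats Theorem~\ref{moebius} and Theorem~\ref{volk} as genuinely distinct entries of the known classification.

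The paper takes a different route: it shows that Theorem~\ref{moebius} is \emph{equivalent} to the Wunderlich Theorem~\ref{wund1} on one-parameter groups of M\"obius transformations, by classifying such groups up to conjugacy (dilations, rotations, translations), observing that the orbits form a pencil, and checking that the orbit of a single transversal circle is exactly an Apollonian set of that pencil; the hexagonality is then imported from Wunderlich's theorem rather than from Volk--Strubecker. Your opening move --- the pencil and its Apollonian sets are M\"obius-invariant, so one may pass to a standard model --- is sound and is essentially the same normalization the paper performs; what is missing is the correct source of hexagonality after normalization, namely the one-parameter group action, not a flattening to straight lines.
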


Let us give the most wonderful example of webs of circular arcs.

A \emph{cyclic} is the curve given by an equation of the form
\begin{equation}\label{Cyclic}
a(x^2+y^2)^2+(x^2+y^2)(bx+cy)+Q(x,y)=0,
\end{equation}
where $a,b,c$ are constants and $Q(x,y)$ is a polynomial of degree at most $2$ not vanishing simultaneously.
Note that conics, lima\c{c}ons of Pascal, and Cartesian ovals are particular cases of cyclics.

We say that a circle is \emph{tangent} to a cyclic, if the circle either has a real tangency point with the cyclic, or a complex one, or passes through a singular point of the cyclic.
The circles doubly tangent to a cyclic naturally split into $\le 4$ \emph{families}: the centers of circles from one family
lie on one conic or line; see \cite[Remark 16]{Pottmann}. %In the Wunderlich example three of them are taken.

\begin{Wunder}\textup{\cite{Wunderlich}} \label{wunderlich} % \textup{The Wunderlich theorem.}
If three distinct families
of circles doubly tangent to
a cyclic are transversal then they contain a hexagonal $3$-web; see Figure~\ref{fig8} to the right.
\end{Wunder}

 %Only one triple from four possible triples of families is crossing

%\begin{figure}[hb]
%\includegraphics[width=0.18\textwidth]{Figure1A.png}
%\qquad
%\includegraphics[width=0.19\textwidth]{wunderlich.png}
%\caption{$3$-web from circles doubly tangent to a cyclic.}
%\label{fig6}
%\end{figure}

%In \cite{Shelekhov-07} A.M. Shelekhov  gives the following classification of all hexagonal $3$-webs formed by $3$ pencils of circles.

A particular case (stated by Blaschke in 1953) of the Blashke-Bol problem was to find all triples of pencils of circles containing webs.

\begin{theorem} \textup{\cite{Shelekhov-07}} \label{shelekhov} The following  pencils of circles contain a hexagonal $3$-web:

(a) \textup{(Volk-Strubecker)} Three pencils of circles belonging to the same bundle. %{a,b,c.. îáúåêòû, à íå äåéñòâèÿ}

(b) \textup{(Lazareva)} Three hyperbolic pencils with a common complex circle such that in each of the pencils there is a circle
orthogonal to all the circles of the other two pencils. %{\bf Îáúåäèíèòü (b) è (c)}

(c) \textup{(Lazareva)} Two elliptic pencils and one hyperbolic pencil with a common real circle such that in each of the pencils
there is a circle orthogonal to all the circles of the other two pencils.

(d) \textup{(Balabanova)} Two orthogonal pencils and the third pencil having a common circle with each of the two orthogonal pencils. %two such that each of them contains a circle belonging to the third pencil.

(e) \textup{(Balabanova)} Two orthogonal parabolic pencils and one hyperbolic pencil; one of the limiting points of the hyperbolic
pencil coincides with the common vertex of the parabolic pencils.

(f) \textup{(Blaschke)} Three elliptic pencils of circles with the vertices $(A,B)$, $(B,C)$, and $(C,A)$. %; see Figure~\ref{fig3} to the right. %: all pencils are elliptic and are defined by pairs of vertices $(A,B)$, $(B,C)$,
%and $(C,A)$.

(g) \textup{(Erdogan)} Two elliptic pencils with the vertices $(A,B)$ and $(B,C)$ and the hyperbolic pencil with the limiting points $C$ and $A$.

%\textup{Class 6.}

(h) \textup{(Lazareva)} Two parabolic pencils and an elliptic pencil with the vertices
at the vertices of the parabolic pencils.

%(h) Two parabolic pencils that do not belong to the same bundle and an elliptic pencil whose vertices
%coincide with the vertices of the parabolic pencils.

%(i) Two parabolic pencils that belong to the same bundle and an elliptic pencil whose vertices coincide
%with the vertices of the parabolic pencils.

(j) \textup{(Erdogan)} An elliptic pencil with the vertices $A$ and $B$, the hyperbolic pencil
with the limiting points $B$ and $C$, and a parabolic pencil with the vertex at $A$ such that the common circle of the elliptic and hyperbolic
pencils is orthogonal to the circle passing through the points $A$, $B$, and $C$.

%There exist no hexagonal $3$-web from circular arcs belonging to $3$ pencils except for those described above.
 % from circular arcs belonging tois contained in such pencils.
%\end{itemize}
%a cyclic (possibly with singularities) form a $3$-web.
\end{theorem}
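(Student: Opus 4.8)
The plan is to use that hexagonality of a $3$-web of circles is a \emph{M\"obius-invariant} local property: every inversion is (on the complement of its center) a real-analytic diffeomorphism carrying circles to circles, so it preserves both the foliation and the closure conditions. Hence for each of the families (a)--(j) I would first apply a well-chosen inversion to put the triple of pencils into a normal position, and then recognize the normalized web as one already proved hexagonal --- the Pappus web through the Graf--Sauer Theorem~\ref{graf}, a web inside a single bundle through the Volk--Strubecker Theorem~\ref{volk}, or a pencil together with two of its Apollonian sets through Theorem~\ref{moebius}.

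The reductions rest on an \emph{inversion dictionary} for pencils. An inversion centered at a vertex of an elliptic pencil turns it into a pencil of lines; an inversion centered at a limiting point of a hyperbolic pencil turns it into a pencil of concentric circles; a parabolic pencil goes to a parabolic pencil, and to a pencil of parallel lines when the center is its vertex. Moreover an elliptic pencil with vertices $A,B$ and the hyperbolic pencil with limiting points $A,B$ are mutually orthogonal, and each is an Apollonian set of the other (the limiting points of one are the base points through which the circles of the other pass). With this, case (a) is exactly Theorem~\ref{volk}, and in the orthogonality families (b), (c), (d), (e) --- where each pencil carries a circle orthogonal to the circles of the remaining two and the three pencils share a common circle --- inverting at the appropriate base point brings the configuration either into a single bundle (Theorem~\ref{volk}) or into a pencil flanked by two of its Apollonian sets (Theorem~\ref{moebius}).

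For the remaining elliptic/parabolic families I would invert at a shared vertex. The Blaschke web (f), inverted at the vertex $A$, becomes the two pencils of lines through the images of $B$ and $C$ together with the elliptic pencil of circles through those two images; the Erdogan and Lazareva families (g), (h), (j) reduce in the same way to a mixture of a pencil of lines, an elliptic pencil, and a concentric (or parabolic) family. In each normalized picture the three foliations can be written as the level sets $u=\mathrm{const}$, $v=\mathrm{const}$, $w=\mathrm{const}$ of explicit rational functions tied by a relation $w=g(u,v)$, and, choosing the leaves of the first two families as coordinate lines, Blaschke's criterion reduces hexagonality to the single identity
\[
\frac{\partial^2}{\partial u\,\partial v}\,\log\frac{g_u}{g_v}\equiv 0 .
\]

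The heart of the matter is this last verification for the genuinely circular families, which need not collapse onto Theorem~\ref{moebius}. The computation is uniform in spirit --- one parametrizes the leaves, forms $g$, and differentiates --- but the main obstacle is twofold: finding the normalizing coordinates in which all three pencils are simultaneously level sets, and checking that the side conditions of the theorem (the common circle in (b)--(d), the coincidence of a limiting point with the parabolic vertex in (e), the orthogonality of the common circle to the circle through $A,B,C$ in (j)) are precisely what force the identity above to hold. For the Blaschke web (f) one may bypass the computation altogether by a synthetic argument: a chase of concyclic points around the hexagon using Miquel's six-circle theorem shows $A_7=A_1$ directly.
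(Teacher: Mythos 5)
The paper does not prove this theorem at all: it is stated in the survey section with a citation to Shelekhov's classification paper, and the individual cases are attributed to Volk--Strubecker, Lazareva, Balabanova, Blaschke, and Erdogan. So there is no in-paper argument to compare yours against, and your proposal has to stand on its own.

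As it stands it is a plan rather than a proof. The sound parts: hexagonality is indeed M\"obius-invariant, your inversion dictionary for pencils is correct, case (a) is literally Theorem~\ref{volk}, and case (f) does admit the clean normalization (invert at $A$ to get two pencils of lines through $B'$, $C'$ and the circles through $B'$, $C'$; the inscribed angle theorem makes the third foliation the level sets of $u-v$, so hexagonality is immediate --- no need even for the Miquel chase). The gaps are concentrated where you say ``the main obstacle is twofold.'' First, the claimed reductions of (b)--(e) to a single bundle or to Theorem~\ref{moebius} are asserted but not established, and at least one fails as stated: three pencils sharing a common circle span in general a four-dimensional linear system, not a bundle, so (b)--(d) do not collapse onto Theorem~\ref{volk}; and in (d) the circles of the third pencil are not tangent to two fixed circles of either orthogonal pencil, so they are not an Apollonian set and Theorem~\ref{moebius} does not apply. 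Second, for (g), (h), (j) --- and for whatever remains of (b)--(e) after normalization --- the entire content of the theorem is the verification that $\frac{\partial^2}{\partial u\,\partial v}\log\frac{g_u}{g_v}\equiv 0$ and that the stated side conditions (the common circle, the orthogonality constraints, the coincidence of the limiting point with the parabolic vertex) are precisely what force this identity; you explicitly defer this computation. For example, in case (g) the normalized web is lines through $A'$, lines through $C'$, and the Apollonius circles of the pair $A'$, $C'$, i.e.\ the level sets of $\arg(X-A')$, $\arg(X-C')$, and $\log\left|\frac{X-A'}{X-C'}\right|$; the third is not an affine function of the first two, so the curvature computation genuinely has to be carried out. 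Until those verifications are done, the hard cases of the theorem remain unproved.
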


According to A.M.~Shelekhov \cite{Shelekhov-07} these are all the possible triples of pencils of circles containing a hexagonal $3$-web. (He divides some of examples (a)--(j) into several subclasses.)
%{\bf Îôîðìèòü â âèäå òàáëèöû-ñïèñêà!}

We see that our examples of hexagonal $3$-webs of circular arcs in Main Theorem~\ref{newexamples}(a)-(e)  are indeed new; see Table~\ref{table1}.
%They distinct from known examples described in: Theorem~\ref{volk} because the circles in each example Theorem~\ref{newexamples}(a)-(e) do not belong to one bundle; Theorem~\ref{moebius} because in examples Theorem~\ref{newexamples}(b)-(e) there is a family of circles enveloping the conic distinct
%from a circle and in example Theorem~\ref{newexamples}(a) the lines tangent to the circle do not belong to the given parabolic pencil;
%Theorem~\ref{wunderlich} because the envelope of all the circles in examples Theorem~\ref{newexamples}(a)-(e) is not a cyclic;
%Theorem~\ref{shelekhov} because the circles in each example Theorem~\ref{newexamples}(a)-(e) do not belong to $3$ pencils.

\begin{table}[hb]
\begin{center} \caption{\label{table1}}
\begin{tabular}{|p{3.1cm}|p{14cm}|}
\hline
Known examples & Difference from the new examples \\
%\hline
%Theorem~\ref{graf} & webs in examples Theorem~\ref{newexamples}(a)-(e) are not from straight lines\\
\hline
Theorem~\ref{graf},~\ref{volk}  & the circles in each example of Theorem~\ref{newexamples}(a)-(e) do not belong to one bundle \\
\hline
Theorem~\ref{moebius} & Theorem~\ref{newexamples}(a): the lines tangent to the circle do not belong to the given parabolic pencil; \\ & Theorem~\ref{newexamples}(b)-(e): there is a family of circles enveloping a conic distinct
from a circle %\\ & Theorem~\ref{newexamples}(f): the set contains no pencils;
 \\
\hline
Theorem~\ref{wunderlich} & the envelope of all the circles in examples Theorem~\ref{newexamples}(a)-(e) is not one cyclic\\
\hline
Theorem~\ref{shelekhov} & the circles in each example Theorem~\ref{newexamples}(a)-(e) do not belong to $3$ pencils\\
\hline
\end{tabular}
\end{center}

\end{table}

%\documentclass[14pt,a4paper,onecolumn,notitlepage,oneside]{extarticle}

%\usepackage[utf8]{inputenc}
%\usepackage[russian]{babel}

%\\begin{tabular}{|c|c|c|}
%\hline
%   & \tabcen{Player 2}{chooses left}
%   & \tabcen{Player 2}{chooses right}  \\
%\hline
%     \tabcen{(a)}{}
%   & 4, 3
%   & --1, --1                        \\
%\hline
%     \tabcen{(b)}{}
%     & 0, 0
%       & 3, 4                     \\
%\hline
%\end{tabular}

%\newpage

\section{Proofs}\label{sec:newexamples}

%Since the three given families are crossing, we have that there are three circles
%$\alpha$, $\beta$ and $\gamma$
%from different families such that these circles have a common point $O$ of transversal intersection. Clearly, there is a sufficiently small neighborhood $U$ of the point $O$ such that for
%each point $A\in U$ there is exactly one circle of each family passing through $A$ and the circles from different families intersect transversely.

In the proofs below we construct a real analytic diffeomorphism $f\colon\Omega\rightarrow\Omega'\subset\mathbb{R}^2$ which maps the intersections of circles from the given sets with an appropriate domain $\Omega$ to the segments of the lines $x=\mathrm{const}$, $y=\mathrm{const}$ and $x+y=\mathrm{const}$.

%A \emph{tangent ray to a conic} is a ray such that the line containing this ray is tangent to the conic at the endpoint.
We need the notions of left and right tangent lines from a point to a conic.
Let a point and a conic be given. Consider a line passing through the point not intersecting the conic. Let us start to rotate this line around the given point
counterclockwise. Suppose that there are two moments when this line is either tangent to the conic or is an asymptotic line.
We say that the lines at the first and the second moments (if such moments exist) are called the \emph{left} and the \emph{right} tangent lines, respectively. If the point lies on the conic then by definition the left and the right tangent lines coincide with the ordinary tangent line.

Let $d(X, \lambda)$ be the distance from a point $X$ to a line $\lambda$.
By $\angle{(\alpha, \beta)}\in[0,\pi)$ we denote the oriented angle between lines $\alpha$ and $\beta$.

\emph{Proof of Theorem~\ref{newexamples}(a).} This assertion is a limiting case of Theorem~\ref{newexamples}(b)
in which the foci of the conic converge to each other. The given conic converges to a circle. The hyperbolic pencil of circles with limiting points at the foci converges to a parabolic pencil of circles with the vertex at the center of the circle.
Thus point (a) follows from (b) because the foliation condition is clearly satisfied and passing to the limit respects the hexagonal closure condition.
Theorem~\ref{newexamples}(a) is proved modulo Theorem~\ref{newexamples}(b).
%So we have that \emph{a parabolic pencil of circles with the vertex at the center of a circle and two families of the tangent lines to the circle form a $3$-web}.

\smallskip

\emph{Proof of Theorem~\ref{newexamples}(b).} Denote by $\gamma$ the given general conic.
Denote by $F_1$ and $F_2$ the foci of $\gamma$.
%Let be the pencil of circles with limiting points at $F_1$ and $F_2$.
%Let $O$ be a point such that the tangent rays $\alpha(O)$, $\beta(O)$ to $\gamma$ passing through $O$, and
%the circle $\gamma(O)\in\mathcal{P}$ passing through $O$ intersect transversely.
%Let $\alpha(O)$, $\beta(O)$ be tangent rays to $\gamma$ passing through $O$. Let $\gamma(O)$ be a circle belonging to $\mathcal{P}$ and passing through $O$. We assume without loss of generality that $\alpha(O)$, $\beta(O)$, and $\gamma(O)$ intersect transversely.
%have a common point $O$ of transversal intersection.
%Consider the family $\mathcal{F}_1$ of tangent rays to $\gamma$ containing $\alpha(O)$. Consider the family $\mathcal{F}_2$ of tangent rays to %$\gamma$ containing $\beta(O)$.
Let $U$ be an appropriate domain such that for each point $A\in U$ the left and the right tangent lines $\alpha(A)$ and $\beta(A)$
to the conic $\gamma$ passing through $A$,
%the right tangent line $\beta(A)$ to the conic $\gamma$ passing through $A$,
and the circle $\omega(A)$ from the pencil passing through $A$ exist and intersect transversely.
%Let $A$ be any point $\in U$. Let $\alpha(A)$ be the
%tangent line from the first family passing through $A$. Let
%$\beta(A)$ be the tangent line from the second family passing through $A$.

%Let $d(F_1, \alpha(A))$ be the distance from $F_1$ to the line containing $\alpha(A)$. Let $d(F_2, \beta(A))$ be the distance from $F_2$ to the line containing $\beta(A)$.
%Let $\gamma$ be any circle from the pencil $\mathcal{P}$ such that $\gamma \cap U \ne \varnothing$.

\begin{figure}[hb]
\centering
\begin{overpic}[width=0.25\textwidth]{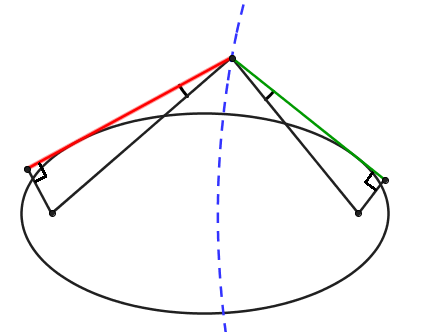}
\put(50, 32) {\color{white}\vrule width .3cm height .4cm}
\put(48,35) {$\omega(A)$}
\put(15,24) {$F_1$}
\put(0,40) {$P$}
\put(74,24) {$F_2$}
\put(92,40) {$Q$}
%\put(40,69) {$U$}
\put(58,67) {$A$}
\put(34, 53) {\color{white}\vrule width .3cm height .3cm}
\put(20,56) {$\alpha(A)$}
\put(67, 52) {\color{white}\vrule width .3cm height .3cm}
\put(67,54) {$\beta(A)$}
\put(29,3) {\color{white}\vrule width .3cm height .3cm}
\put(30,3) {$\gamma$}
\end{overpic}
\caption{To the proof of Lemma~\ref{cl1}.}
\label{fig9}
\end{figure}

\begin{lemma} %Let $A$ be an arbitrary point in $U$.
For a fixed circle $\omega$ from the pencil the ratio $\frac{d(F_1, \alpha(A))}{d(F_2, \beta(A))}$ does not depend on the point $A\in\omega\cap U$.
\label{cl1}
\end{lemma}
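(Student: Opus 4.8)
The plan is to reduce the ratio of the two distances to a ratio of focal radii, and then to recognize the circles of the pencil as Apollonius circles of the pair $(F_1,F_2)$.

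First I would use the structure of the pencil. The hyperbolic pencil of circles with limiting points $F_1$ and $F_2$ is exactly the family of \emph{Apollonius circles} of the pair $(F_1,F_2)$: each circle $\omega$ of the pencil is the locus of points $X$ with $\tfrac{|XF_1|}{|XF_2|}=k_\omega$ for some constant $k_\omega$ (the limiting points $F_1,F_2$ being the degenerate members with $k=0$ and $k=\infty$). Consequently, for every $A\in\omega\cap U$ the ratio $\tfrac{|AF_1|}{|AF_2|}=k_\omega$ is the same. Thus it is enough to prove the pointwise identity $\tfrac{d(F_1,\alpha(A))}{d(F_2,\beta(A))}=\tfrac{|AF_1|}{|AF_2|}$ for every $A\in U$, since its right-hand side is then constant along $\omega\cap U$.

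Next I would rewrite the two distances using the fact that \emph{both} tangent lines pass through $A$. Dropping perpendiculars from $F_1$ onto $\alpha(A)$ and from $F_2$ onto $\beta(A)$ and using that these lines contain $A$ gives
\[
d(F_1,\alpha(A))=|AF_1|\,\sin\angle(AF_1,\alpha(A)),\qquad
d(F_2,\beta(A))=|AF_2|\,\sin\angle(AF_2,\beta(A)),
\]
where $AF_1$ and $AF_2$ denote the lines through $A$ and the respective focus. Hence the identity of the previous paragraph is equivalent to $\sin\angle(AF_1,\alpha(A))=\sin\angle(AF_2,\beta(A))$, i.e.\ to the statement that the left tangent makes the same angle with $AF_1$ as the right tangent makes with $AF_2$.

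The crux is precisely this equality of angles, which is the classical isogonality of the focal radii: the lines $AF_1$ and $AF_2$ are reflections of one another in the bisector of the angle between the two tangents $\alpha(A),\beta(A)$ drawn from the external point $A$ to the conic. I would derive this from the reflection property of conics, namely that the mirror image of one focus in any tangent line lies on the circle of radius $2a$ centred at the other focus; applying this to both tangents through $A$ and using $|AF_1|$ as the common distance from $A$ to the two reflected images produces congruent triangles that yield the isogonal relation. The main obstacle is making this step fully rigorous in a form that covers ellipses and hyperbolas uniformly, matching the \emph{left}/\emph{right} labelling of the tangents to $F_1,F_2$ consistently on the connected domain $U$ (and handling the degenerate situation where a tangent degenerates to an asymptote). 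Once the isogonality is established, combining it with the two previous paragraphs gives
\[
\frac{d(F_1,\alpha(A))}{d(F_2,\beta(A))}=\frac{|AF_1|}{|AF_2|}=k_\omega,
\]
which is independent of $A\in\omega\cap U$, proving the lemma.
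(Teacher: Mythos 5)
Your proposal is correct and follows essentially the same route as the paper: both reduce the ratio of distances to $\tfrac{|AF_1|}{|AF_2|}$ via the isogonal property of the focal radii with respect to the two tangents from $A$ (the paper phrases this through the similar right triangles $F_1AP$ and $F_2AQ$, you through sines of equal angles), and both then invoke the Apollonius-circle characterization of the hyperbolic pencil with limiting points $F_1,F_2$. The only difference is that the paper simply cites the isogonal property, while you sketch a derivation of it from the reflection property; that is a reasonable (and standard) way to fill in the cited fact.
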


\begin{proof}
Let $P$ be the orthogonal projection of $F_1$ onto the line $\alpha(A)$. Let $Q$ be the orthogonal projection
of $F_2$ onto the line $\beta(A)$; see Figure~\ref{fig9}.
By the \emph{isogonal property of conics} (see \cite[\S 1.4]{AkopZasl}), we have $\angle{PAF_1}=\angle{F_2AQ}$.
%$\angle{(\alpha(A), F_1A)}=\angle{(F_2A, \beta(A))}$.
Thus the right triangles $F_1AP$ and $F_2AQ$ are similar. By the well-known \emph{geometric characterization of a
hyperbolic pencil of circles} (see \cite[Theorem 2.12]{AkopZasl}) the ratio $\frac{|F_1A|}{|F_2A|}$ does not depend on the point $A$
lying on a circle $\omega$ from the pencil with limiting points $F_1$ and $F_2$.
Hence

\smallskip
$$\frac{d(F_1, \alpha(A))}{d(F_2, \beta(A))}=\frac{|F_1P|}{|F_2Q|}=\frac{|F_1A|}{|F_2A|}=\mathrm{const}.$$
\smallskip

%where the last equation follows from the following well-known lemma.

%\begin{lemma}\textup{Geometric characterization of a hyperbolic pencil of circles.} Let $\gamma$ be any circle from the pencil of circles with limiting points at $F_1$ and $F_2$. Then the ratio $\frac{|F_1A|}{|F_2A|}$ does not depend on the point $A\in \gamma$.
%\label{pencil}
%\end{lemma}

%Since $A$ lies on the circle $\gamma$ from the pencil of circles with limiting points $F_1$ and $F_2$, we have
%$\frac{F_1A}{F_2A}=\emph{const}$. Thus $\frac{d(F_1, \alpha)}{d(F_2, \beta)}=\emph{const}$.
\end{proof}

%Let $A$ be any point sufficiently close to $O$. Let $\alpha$ and $\beta$ be the tangent lines from the first and the second families respectively passing through $A$. Let $\gamma$ be the circle from the pencil passing through $A$.
Consider the map $f$: $U\rightarrow\mathbb{R}^2$ such that for each point $A\in U$

%$$f(A):=(\ln{\{d(F_1,\alpha(A))\}}, -\ln{\{d(F_2, \beta(A))\}}),$$
\smallskip
$$f(A):=(\ln{d(F_1,\alpha(A))}, -\ln{d(F_2, \beta(A))}).$$
\smallskip
%where $\alpha(A)$ is the tangent line from the first family passing through $A$ and $\beta(A)$ is the tangent line from the second family passing through $A$.

Choose a subdomain (still denoted by $U$) in which the differential of $f$ is nonzero. % and for each point $A\in U$
%the curves $\alpha(A)$, $\beta(A)$, and $\omega(A)$ are transversal.
%Choose a subdomain $U$ in which the differential of $f$ is nonzero.
From Lemma~\ref{cl1} it follows  that $f$ maps the intersections of the left tangent lines to $\gamma$, the right tangent lines to $\gamma$, and the circles from the pencil  with $U$ to the segments of the lines  $x=\mathrm{const}$, $y=\mathrm{const}$, and $x+y=\mathrm{const}$. In particular, three transversal curves
$\alpha(A)$, $\beta(A)$, and $\omega(A)$ have transversal $f$-images. So the differential of $f$ in $U$ is nondegenerate because it is nonzero. Thus the restriction of the map $f$
to an appropriate subdomain $\Omega\subset U$ is a diffeomorphism. %Let $g$ be an affine map of the plane that takes
%the lines  $x=\mathrm{const}$, $y=\mathrm{const}$ and $x+y=\mathrm{const}$ to the lines parallel to the sides of an equilateral triangle.
%Then the composition $g\circ f$ is
Theorem~\ref{newexamples}(b) is proved.

\smallskip

\emph{Proof of Theorem~\ref{newexamples}(c).} Denote by $\gamma$ the given conic. Denote by $F_1$ the given focus of $\gamma$. %Denote by $\mathcal{P}$ the pencil of lines with vertex at $F_1$.
%Let $\mathcal{F}_1$ be a family of tangent rays to the conic $\gamma$.
For each point $A$ denote by $\alpha(A)$ the line belonging to the pencil and passing through $A$. Denote by $\beta(A)$ the left tangent line to $\gamma$ passing through $A$. %(We assume that such tangent ray exists).

%Let $l_1$ and $l_2$ be either lines or rays. We denote by $\angle(l_1,l_2)$ an oriented angle $\angle(\tilde{l_1},\tilde{l_2})$ between the lines $\tilde{l_1}$ and $\tilde{l_2}$ containing $l_1$ and $l_2$. We suppose that oriented angles take
%values from interval $[0,\pi)$.

%Let $T$ be a point on the conic $\gamma$.
%Suppose the angle $\angle{(\alpha(T), \beta(T))}$ takes values
%from

Let $(\phi_1, \phi_2)\subset[0,\pi/2)$ be an interval
such that for each $\phi\in(\phi_1, \phi_2)$ there is a point
$T$ on the conic $\gamma$ satisfying $\angle{(\alpha(T), \beta(T))}=\phi$.
%$[\phi_1, \phi_2]$. % as $X$ moves along $\gamma$.
%Consider a point $O$ on the minor axis of $\gamma$ such that $\angle{(\alpha(O), \beta(O))}\in[\phi_1, \phi_2]$. Evidently,
%we can assume that $O$ is not lying on the conic $\gamma$ and $O$ is distinct from the center of $\gamma$.
%Let $O$ be a point such that $\angle{(\alpha(O), \beta(O))}\in(\phi_1, \phi_2)$.
%Obviously, $\alpha(O)$ and $\beta(O)$ intersect transversely.
Let $U$ be an appropriate domain such that for each point $A\in U$ there exist $\alpha(A)$, $\beta(A)$ and
$\angle{(\alpha(A), \beta(A))}\in(\phi_1, \phi_2)$.

%distinct from the center of the conic and not lying on $\gamma$ such that there is
%a tangent ray $\beta(O)$ belonging to $\mathcal{F}$ passing through $O$ and
%a line $\alpha(O)$ belonging to the pencil $\mathcal{P}$ passing through $O$.

%Let $\alpha(A)$ be a line belonging to the pencil $\mathcal{P}$ passing through a point $A$.
%Let $\beta(A)$ be a tangent ray belonging to $\mathcal{F}$ passing through $A$.

%tangent rays to $\gamma$ and $\gamma$ be a circle belonging to $\mathcal{P}$ such that $\alpha$, $\beta$, and $\gamma$ have a common point $O$ of transversal intersection. Consider the family $\mathcal{F}_1$ of tangent rays to $\gamma$ containing $\alpha$. Consider the family $\mathcal{F}_2$ of tangent rays to $\gamma$ containing $\beta$.
%Let $U$ be an appropriate neighborhood of the point $O$ such that for each point $A\in U$ the tangent ray $\alpha(A)\in\mathcal{F}_1$ passing through $A$,
%the tangent ray $\beta(A)\in\mathcal{F}_2$ passing through $A$, and the circle $\gamma(A)\in\mathcal{P}$ passing through $A$ intersect transversely.

\begin{lemma} The locus of all the points $A\in U$ such that $\angle{(\alpha(A), \beta(A))}=\phi,$ where $\phi\in(\phi_1, \phi_2)$ is fixed, is an arc of the circle doubly tangent to the conic $\gamma$ such that the center of this circle lies on the minor axis
of the conic.
\label{locus}
\end{lemma}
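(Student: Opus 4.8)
The plan is to introduce coordinates in which $\gamma$ is the standard conic with center at the origin, foci $F_1=(-c,0)$ and $F_2=(c,0)$ on the $x$-axis, and the minor axis along the $y$-axis; I would treat the ellipse $x^2/a^2+y^2/b^2=1$ in detail, the hyperbola being completely analogous. The key classical fact I will use is that the foot $P=P(A)$ of the perpendicular dropped from the focus $F_1$ onto any tangent line of $\gamma$ lies on the auxiliary circle $x^2+y^2=a^2$; this follows at once from the reflection (isogonal) property of conics already invoked in the proof of Lemma~\ref{cl1}. I will therefore parametrize the tangent line $\beta(A)$ by an angle $\psi$ with $P=(a\cos\psi,a\sin\psi)$, and record that $\beta(A)$ passes through $P$ orthogonally to $F_1P$, so that a direction vector of $\beta(A)$ is $(-a\sin\psi,\,a\cos\psi+c)$, whose length equals $|F_1P|=d(F_1,\beta(A))$.

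Next I would exploit the angle condition. Since $P$ is the foot of the perpendicular from $F_1$, the triangle $F_1PA$ is right-angled at $P$, and the hypothesis $\angle(\alpha(A),\beta(A))=\phi$ (the angle at $A$ between the focal line $AF_1$ and $\beta(A)$) gives $|PA|=|F_1P|\cot\phi$. Hence $A$ is obtained from $P$ by sliding along $\beta(A)$ through the signed distance $|F_1P|\cot\phi$, the sign being fixed by the choice of the \emph{left} tangent line. Writing $\lambda=\cot\phi$ and adding $\lambda$ times the above direction vector to $P$, I obtain the explicit expression
$$A=\bigl(a(\cos\psi-\lambda\sin\psi),\ a(\sin\psi+\lambda\cos\psi)+\lambda c\bigr).$$

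The conclusion then follows from two short computations. First, $A_x^2+(A_y-\lambda c)^2=a^2(1+\lambda^2)$ identically in $\psi$, so $A$ runs over the circle $\Gamma$ with center $(0,c\cot\phi)$ and radius $a/\sin\phi$; its center lies on the minor axis, as claimed, and the locus is the arc of $\Gamma$ swept out as $\psi$ ranges over the interval corresponding to $\angle(\alpha,\beta)=\phi$ for the left tangent. Second, substituting $x^2=a^2(1-y^2/b^2)$ into the equation of $\Gamma$ and using $c^2=a^2-b^2$ should collapse the result to the perfect square $(y+b^2\cot\phi/c)^2=0$; the double root in $y$, together with the two symmetric values $x=\pm a\sqrt{1-y^2/b^2}$, exhibits two tangency points of $\Gamma$ with $\gamma$ symmetric about the minor axis, so $\Gamma$ is doubly tangent to $\gamma$.

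The step I expect to require the most care is not the algebra but the bookkeeping of orientations: one must check that the \emph{left} tangent line, the oriented angle $\angle(\cdot,\cdot)\in[0,\pi)$, and the sign of $\cot\phi$ are matched so that the traced set is a single arc of $\Gamma$ rather than the whole circle or the wrong branch, and that the analogous signs in the hyperbolic case (where $\beta(A)$ may degenerate to an asymptotic line and $c^2=a^2+b^2$) still produce a perfect square and hence a genuine bitangent circle. The remaining computations are routine.
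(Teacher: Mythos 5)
Your proposal is correct, and it shares its backbone with the paper's argument: both proofs rest on the pedal--circle property (the foot $P$ of the perpendicular from $F_1$ to a tangent line lies on the auxiliary circle $\omega_0$), and your explicit formula $A=P+\cot\phi\cdot R_{90^\circ}(P-F_1)+F_1$ is exactly the paper's observation, phrased in coordinates, that $A$ is the image of $P$ under the spiral similarity about $F_1$ with angle $\tfrac{\pi}{2}-\phi$ and ratio $1/|\sin\phi|$ --- whence the locus is the image circle $\omega$ of $\omega_0$. Where you genuinely diverge is the second half. The paper locates the center $O_\omega$ on the perpendicular bisector of $F_1F_2$ by angle-chasing, then proves double tangency synthetically: it takes the point $T\in\gamma$ with $\angle(\alpha(T),\beta(T))=\phi$ guaranteed by $\phi\in(\phi_1,\phi_2)$, uses the optical property of conics to show $F_1,O_\omega,F_2,T$ are concyclic, deduces $O_\omega T\perp\beta(T)$ so that $T$ is a tangency point, and gets the second tangency point by reflection in the minor axis. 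You instead read off the center $(0,c\cot\phi)$ directly from the parametrization (so the ``center on the minor axis'' claim is immediate) and establish bitangency by the perfect-square computation $\bigl(\tfrac{c}{b}y+b\cot\phi\bigr)^2=0$; this is shorter and handles ellipse and hyperbola uniformly. The one thing your route leaves slightly implicit, which the paper's synthetic argument supplies automatically, is that the double root $y_0=-b^2\cot\phi/c$ corresponds to \emph{real} tangency points (i.e.\ $|y_0|\le b$): this is precisely what the hypothesis $\phi\in(\phi_1,\phi_2)$ buys, since the witness point $T\in\gamma$ lies on your circle $\Gamma$ and is a point of contact. Adding that one sentence, together with the orientation bookkeeping you already flagged, would make your proof complete.
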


\begin{proof} %Denote the given locus by $\Gamma$.
%We use the following well-known lemma (see \cite[p. 11.8]{Akop}).

%\begin{lemma}  \textup{(Pedal circle of the conic.)} The projections of the focus $F_1$ of the conic $\gamma$ onto the tangent lines of $\gamma$ lie on one circle $\omega_0$. The center of the circle $\omega_0$ coincides with the center of the conic.
%\label{pedal}
%\end{lemma}

%Let $\omega$ be the pedal circle of the conic $\gamma$ with respect to $F$.
Denote by $P$ the projection of the focus $F_1$ onto the line containing $\beta(A)$; see Figure~\ref{fig10} to the left.
By \emph{pedal circle property of conics} (see \cite[\S 11.8]{Akop}) the projections of the focus $F_1$ of the conic $\gamma$ onto the tangent lines of $\gamma$ lie on one circle $\omega_0$. The center of the circle $\omega_0$ coincides with the center of the conic.

%\newpage

\begin{figure}[hb]
%\begin{center}
\begin{overpic}[width=0.25\textwidth]{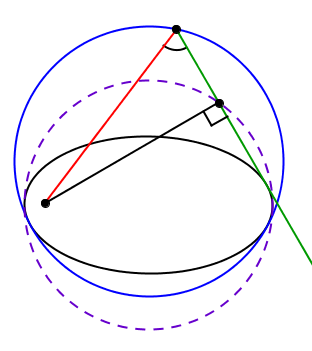}
\put(31, 89) {\color{white}\vrule width .35cm height .3cm}
\put(32,90) {$\omega$}
\put(14, 64) {\color{white}\vrule width .3cm height .4cm}
\put(15,65) {$\omega_0$}
\put(15,35) {$F_1$}
%\put(0,40) {$P$}
%\put(74,24) {$F_2$}
%\put(92,40) {$Q$}
%\put(40,69) {$U$}
\put(52,92) {$A$}
\put(64,72) {$P$}
\put(29, 64) {\color{white}\vrule width .3cm height .3cm}
\put(29,65) {$\alpha(A)$}
\put(81, 28) {\color{white}\vrule width .3cm height .4cm}
\put(81,30) {$\beta(A)$}
\put(29,20) {\color{white}\vrule width .3cm height .3cm}
\put(30,21) {$\gamma$}
\end{overpic}
\qquad
\begin{overpic}[width=0.29\textwidth]{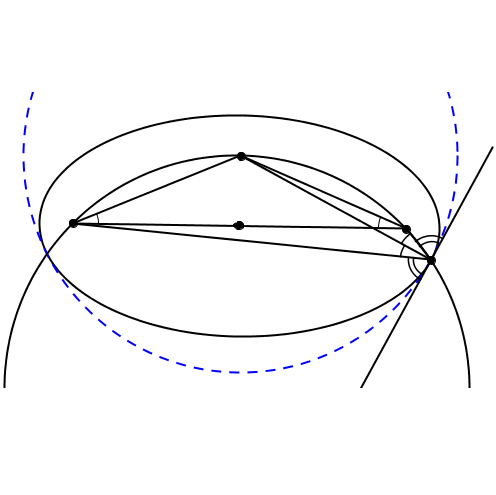}
%\put(32, 89) {\color{white}\vrule width .3cm height .3cm}
%\put(32,90) {$\gamma$}
%\put(14, 64) {\color{white}\vrule width .3cm height .4cm}
%\put(15,65) {$\omega$}
\put(9, 57) {\color{white}\vrule width .35cm height .4cm}
\put(8,58) {$F_1$}
%\put(0,40) {$P$}
\put(82, 55) {\color{white}\vrule width .35cm height .35cm}
\put(81,56) {$F_2$}
\put(52,67) {\color{white}\vrule width .1cm height .1cm}
\put(46,70) {$O_{\omega}$}
\put(45,57) {$O_{\gamma}$}
\put(89,41) {\color{white}\vrule width .35cm height .3cm}
\put(89,42) {$T$}
\put(72,23) {\color{white}\vrule width .35cm height .45cm}
\put(73.5,25.5) {$\beta(T)$}
%\put(92,40) {$Q$}
%\put(40,69) {$T$}
%\put(52,92) {$A$}
%\put(64,72) {$P$}
%\put(29, 64) {\color{white}\vrule width .3cm height .3cm}
%\put(29,65) {$\alpha$}
%\put(81, 28) {\color{white}\vrule width .3cm height .4cm}
%\put(81,29) {$\beta$}
\put(30,32) {\color{white}\vrule width .3cm height .3cm}
\put(31,32) {$\gamma$}
\put(39,23) {\color{white}\vrule width .35cm height .3cm}
\put(40,24) {$\omega$}
\end{overpic}
%\end{center}
\caption{To the proof of Lemma~\ref{locus}.}
\label{fig10}
\end{figure}

%Since $\angle{(\alpha, \beta)}=\mathrm{const}$, we have
Note that $|F_1A|/|F_1P|=1/|\sin{\angle{(\alpha(A), \beta(A))}}|=1/|\sin{\phi}|$ and $\angle(F_1P,F_1A)=\frac{\pi}{2}-\angle{(\alpha(A),\beta(A))}=\frac{\pi}{2}-\phi$.
%Let $\rho$ be the counterclockwise rotation about $F_1$ through the angle $\frac{\pi}{2}-\phi$.
%Let $\delta$ be the homothety with the center $F_1$ and the coefficient of dilation $1/|\sin{\phi}|$.
%Then $\delta \circ \rho(\omega)$ is a circle $\gamma$.
The image of the circle $\omega_0$ under the composition of the counterclockwise rotation about $F_1$ through the angle $\frac{\pi}{2}-\phi$
and the homothety with the center $F_1$ and the coefficient of dilation $1/|\sin{\phi}|$ is a circle $\omega$.
Evidently, the given locus is $\omega \cap U$.

Let us prove that $\omega$ is doubly tangent to $\gamma$ and the center of $\omega$ lies on the minor
axis of $\gamma$. Denote by %$O_{\omega}$ and
$O_{\gamma}$ the center of $\gamma$ and by $O_{\omega}$ the center of $\omega$; see Figure~\ref{fig10} to the right. %Note that $O_{\omega}$ is the center of $\gamma$.
%We have that $\delta \circ \rho(O_{\omega})=O_{\gamma}$. Thus $O_{\gamma}$ lies on the minor axis of $\gamma$.
%Suppose that there is a point
Since $\phi\in(\phi_1, \phi_2)$, it follows that there is $T\in\gamma$ such that $\angle{(\alpha(T),\beta(T))}=\phi$. %\angle{(\alpha, \beta)}$. %, where $\tau$ is the tangent line to $\gamma$
%at $T$.
Let $F_2$ be the other focus of $\gamma$.
Since $\omega$ is the image of $\omega_0$ under the above composition,
%=\delta \circ \rho(\omega_0)$,
we have $\angle{F_1O_{\gamma}O_{\omega}}=\angle{F_1PA}=\frac{\pi}{2}$ and $\phi=\angle{F_1AP}=\angle{F_1O_{\omega}O_{\gamma}}$. So the point $O_{\omega}$
lies on the bisector of the segment $F_1F_2$, $\angle{(F_1F_2, F_1O_{\omega})}=\angle{(F_2O_{\omega}, F_2F_1)}$, and
$\angle{F_1O_{\omega}F_2}=2\phi$.
By the construction of the point $T$ and the \emph{optical property of conics} (See \cite[\S 1.3]{AkopZasl}) we have $\phi=\angle{(TF_1, \beta(T))}=\angle{(\beta(T), TF_2)}$. Thus $\angle{(O_{\omega}F_1,O_{\omega}F_2)}=\angle{(TF_1, TF_2)}$. So the points $F_1$, $O_{\omega}$, $F_2$, and $T$ are cocyclic. Since $\angle{(F_1F_2, F_1O_{\omega})}=\angle{(F_2O_{\omega}, F_2F_1)}$ and the points $F_1$, $O_{\omega}$, $F_2$, and $T$ are cocyclic we have $\angle{(TO_{\omega}, TF_1)}=\angle{(TF_2, TO_{\omega})}$. So $O_{\omega}T$ is perpendicular to $\beta(T)$. Hence, $\beta(T)$ is the tangent line to $\omega$ at $T$. Thus $T$
is a tangency point of $\omega$ and $\gamma$.
By reflection symmetry, the point $T'$ symmetric to $T$ with respect to the minor axis of $\gamma$ is another tangency point of $\omega$ and $\gamma$.
Thus $\omega$ and $\gamma$ are doubly tangent.
%If $\gamma\cap\gamma=\varnothing$, then $\gamma$ and $\gamma$ are doubly tangent at imaginary points.
%and $\angle{F}=\frac{\pi}{2}-\angle{(\alpha,\beta)}$.
\end{proof}

%By Lemma~\ref{locus} for each point $A\in U$ there is a circular arc passing through $A$ contained in the circle $\omega(A)$
%doubly tangent to the conic $\gamma$ such that the arc is the

Denote by $\omega(A)$ the locus of all the points $X\in U$ such that $\angle{(\alpha(X), \beta(X))}=\angle{(\alpha(A), \beta(A))}$.
%We choose a subdomain of $U$ still noted by $U$ such that
%Consider the family $\mathcal{F}_2$ of these circular arcs.
Let $\lambda$ be the major axis of the conic $\gamma$.
%Obviously,

%$$\angle{(\alpha(A),\beta(A))}=\angle{(\alpha(A),\lambda)}+\angle{(\lambda,\beta(A))}.$$

Consider the map $f$: $U\rightarrow\mathbb{R}^2$ such that for each point $A\in U$

%$$f(A):=(\ln{\{d(F_1,\alpha(A))\}}, -\ln{\{d(F_2, \beta(A))\}}),$$
\smallskip
$$f(A):=(\angle{(\alpha(A),\lambda)}, \angle{(\lambda,\beta(A))}).$$
\smallskip

Choose a subdomain (still denoted by $U$) in which $f$ is continuous, the differential of $f$ is nonzero, and for each point $A\in U$
the curves $\alpha(A)$, $\beta(A)$, and $\omega(A)$ are transversal. By Lemma~\ref{locus} it follows  that $f$ maps the intersections of lines from the pencil, the left
tangent lines to $\gamma$, and considered circles doubly tangent to $\gamma$ with $U$ to the segments of the lines  $x=\mathrm{const}$, $y=\mathrm{const}$ and $x+y=\mathrm{const}$. In particular, three transversal curves
$\alpha(A)$, $\beta(A)$, and $\omega(A)$ have transversal $f$-images. So the differential of $f$ in $U$ is nondegenerate because it is nonzero. Thus the restriction of the map $f$
to an appropriate domain $\Omega\subset U$ is a diffeomorphism. Theorem~\ref{newexamples}(c) is proved.

%\newpage

%Let $O$ be any common point of transversal intersection of the line $\alpha_1$ from the first family, the tangent line $\beta_1$ from the second family, and the circle $\gamma_1$ from the third family.
%Clearly, there is a neighborhood $U$ of the point $O$ where the given families of lines and circles are crossing.
%Thus the \emph{foliation condition} holds.
%Let us check the \emph{closure condition.}
%\newpage

%Consider an arbitrary point $A_1\in\alpha_1$ sufficiently closed to $O$.
%Let $\beta_2$ be the tangent line of the second family and  $\gamma_2$ be the circle from the given pencil passing through $A_1$. Let $A_2$ be an intersection point of $\gamma_2$ and $\beta_1$. Consider the line $\alpha_2$ from the first family passing through $A_2$. Let $A_3$ be an intersection point of $\alpha_2$ and $\gamma_1$.
%Analogously, we define curves $\alpha_3$, $\beta_3$, $\gamma_3$,
%points $A_4$, $A_5$, $A_6$, and
%$A_7$. See Figure.

%By Lemma~\ref{locus} we have that
%$$\angle{(\alpha_1, \beta_1)}=\angle{(\alpha_2, \beta_2)}=\angle{(\alpha_3, \beta_3)}; \qquad
%\angle{(\alpha_2, \beta_1)}=\angle{(\alpha_1, \beta_3)};$$

%Thus

%$$\angle{(\alpha_1, \beta_2)}=\angle{(\alpha_3, \beta_1)}$$

%So $A_7=A_1$.

\smallskip

\emph{Proof of Theorem~\ref{newexamples}(d).} Denote by $\gamma$ the given parabola. Denote by $F$ and $\delta$ the focus and the directrix of the parabola.
Consider the hyperbolic pencil with the limiting points $F$ and $L\in\delta$.
%Let $\mathcal{P}$ be the given pencil of circles. % pencil of circles with limiting points at $F_1$ and $F_2$.
%Let $O$ be a point such that the tangent rays $\alpha(O)$, $\beta(O)$ to $\gamma$ passing through $O$, and
%the circle $\gamma(O)\in\mathcal{P}$ passing through $O$ intersect transversely.
%Consider the family $\mathcal{F}_1$ of tangent rays to $\gamma$ containing $\alpha(O)$. Consider the family $\mathcal{F}_2$ of tangent rays to $\gamma$ containing $\beta(O)$.
Let $U$ be an appropriate domain such that for each point $A\in U$ the left and the right tangent lines $\alpha(A)$ and $\beta(A)$
to the parabola $\gamma$ passing through $A$,
%the right tangent line $\beta(A)$ to the conic $\gamma$ passing through $A$,
and the circle $\omega(A)$ from the pencil passing through $A$ exist and intersect transversely.

%Let $U$ be an appropriate  such that for each point $A\in U$ the tangent ray $\alpha(A)\in\mathcal{F}_1$ passing through $A$,
%the tangent ray $\beta(A)\in\mathcal{F}_2$ passing through $A$, and the circle $\gamma(A)\in\mathcal{P}$ passing through $A$ intersect transversely.

%Let $A$ be an arbitrary point $\in U$.
%Let $\alpha:=\alpha(A)$ be the tangent line from the first family passing through $A$. Let $\beta:=\beta(A)$ be the tangent line from the second family passing through $A$.

\begin{figure}[htbp]
\centering
\begin{overpic}[width=0.35\textwidth]{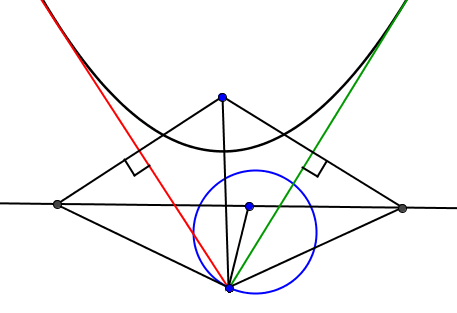}
\put(58,4) {\color{white}\vrule width .6cm height .3cm}
\put(59,3) {$\omega(A)$}
\put(50,49) {$F$}
\put(44,0) {$A$}
\put(9,16) {$P$}
\put(86,16) {$Q$}
\put(2.5, 19) {\color{white}\vrule width .25cm height .35cm}
\put(3,22) {$\delta$}
\put(55,25) {$L$}
\put(23, 39) {\color{white}\vrule width .3cm height .3cm}
\put(14,39) {$\alpha(A)$}
\put(72, 40) {\color{white}\vrule width .3cm height .35cm}
\put(72,40) {$\beta(A)$}
\put(40,33) {\color{white}\vrule width .3cm height .3cm}
\put(40,34) {$\gamma$}
\end{overpic}
\caption{To the proof of Lemma~\ref{cl3}.}
\label{fig11}
\end{figure}

%Denote by $\delta$ the directrix of the given parabola.
%Let $\alpha$ be a tangent ray belonging to $\mathcal{F}_1$. Let $\beta$ be a tangent ray belonging to $\mathcal{F}_2$.
Consider the line passing through $F$ and perpendicular
to $\alpha(A)$; see Figure~\ref{fig11}.
Denote by $P$ the intersection of this line and $\delta$. Consider the line passing through $F$ and perpendicular
to $\beta(A)$. Denote by $Q$ the intersection of this line and $\delta$.
%Denote by $L$ the limiting point of the pencil lying on the directrix.

Set

\smallskip
$$s(A):=\frac{|PL|\cdot|\cos{\angle{(\alpha(A),\delta)}}|}{|FP|} \textup{\quad and \quad} t(A):=\frac{|QL|\cdot|\cos{\angle{(\delta, \beta(A))}}|}{|FQ|}.$$
\smallskip

%Let $\gamma$ be an arbitrary circle from the pencil such that $\gamma \cap U \ne \varnothing$.

\begin{lemma}
For a fixed circle $\omega$ from the pencil the product $s(A) \cdot t(A)$ does not depend on the point $A\in\omega\cap U$.
% The  does not depend on the point $A\in\gamma\cap U$. %$$(\frac{PL\cdot\cos{\angle{(\alpha,\delta)}}}{PF})\cdot(\frac{QL\cdot\cos{\angle{(\beta,\delta)}}}{QF})=\emph{const}.$$
\label{cl3}
\end{lemma}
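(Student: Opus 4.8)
The plan is to reduce the product $s(A)\cdot t(A)$ to a function of the single ratio $|FA|/|LA|$, which is constant along each circle of the hyperbolic pencil by the geometric characterization of hyperbolic pencils already invoked in Lemma~\ref{cl1}. The crucial elementary input is the \emph{reflection property of the parabola}: the reflection of the focus $F$ in any tangent line to $\gamma$ lies on the directrix $\delta$. Since the line through $F$ perpendicular to $\alpha(A)$ is exactly the line joining $F$ to its reflection in $\alpha(A)$, and this reflection lies on $\delta$, the point $P$ is precisely the reflection of $F$ in $\alpha(A)$; likewise $Q$ is the reflection of $F$ in $\beta(A)$. In particular $|FP|=2\,d(F,\alpha(A))$ and $|FQ|=2\,d(F,\beta(A))$, and $P,Q$ are genuine points of $\delta$.

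First I would fix coordinates adapted to the parabola: write $\gamma$ as $x^2=4py$, so that $F=(0,p)$, the directrix $\delta$ is the line $y=-p$, and $L=(\ell,-p)$ for a constant $\ell$. The tangent line at parameter $\tau$ is $y=\tau x-p\tau^2$, and the two tangents through $A=(x_0,y_0)$ correspond to the roots $\tau_1,\tau_2$ of $p\tau^2-x_0\tau+y_0=0$, whence $\tau_1+\tau_2=x_0/p$ and $\tau_1\tau_2=y_0/p$. By the reflection property $P=(2p\tau_1,-p)$ and $Q=(2p\tau_2,-p)$. A direct computation then gives $|FP|=2p\sqrt{1+\tau_1^2}$, $|\cos\angle(\alpha(A),\delta)|=1/\sqrt{1+\tau_1^2}$, and $|PL|=|2p\tau_1-\ell|$, so that $s(A)=|2p\tau_1-\ell|\,/\,\bigl(2p(1+\tau_1^2)\bigr)$, and symmetrically $t(A)=|2p\tau_2-\ell|\,/\,\bigl(2p(1+\tau_2^2)\bigr)$.

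The heart of the argument is then two algebraic identities. The first, $(1+\tau_1^2)(1+\tau_2^2)=|FA|^2/p^2$, follows from $(\tau_1+\tau_2)^2+(\tau_1\tau_2-1)^2=(1+\tau_1^2)(1+\tau_2^2)$ after substituting $\tau_1+\tau_2=x_0/p$ and $\tau_1\tau_2=y_0/p$. The second, $(2p\tau_1-\ell)(2p\tau_2-\ell)=|LA|^2-|FA|^2$, follows by expanding the left-hand side to $4py_0-2\ell x_0+\ell^2$ and comparing with $|LA|^2-|FA|^2$. Multiplying the expressions for $s(A)$ and $t(A)$ and using both identities collapses everything to
\[
s(A)\cdot t(A)=\frac{\bigl|\,|LA|^2-|FA|^2\,\bigr|}{4\,|FA|^2}=\frac{1}{4}\left|\frac{|LA|^2}{|FA|^2}-1\right|,
\]
which depends on $A$ only through the ratio $|FA|/|LA|$. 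By the geometric characterization of a hyperbolic pencil of circles with limiting points $F$ and $L$, this ratio is constant along each circle $\omega$ of the pencil, so $s(A)\cdot t(A)$ is constant on $\omega\cap U$, as claimed. I expect the only real obstacle to be the clean identification of $P$ and $Q$ as focus-reflections (and the consequent correct reading of the trigonometric factors); once the coordinates are in place, the two identities are routine, and it is the second one, $(2p\tau_1-\ell)(2p\tau_2-\ell)=|LA|^2-|FA|^2$, that makes the product degenerate exactly into the Apollonius ratio.
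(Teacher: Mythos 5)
Your proof is correct, and it rests on the same two geometric pillars as the paper's: the identification of $P$ and $Q$ as the reflections of the focus in the two tangent lines (so that $|AP|=|AQ|=|AF|$), and the collapse of the product $s(A)\,t(A)$ to $\frac14\bigl(1-|AL|^2/|AF|^2\bigr)$, which is constant on each Apollonius circle of the pencil. The difference is one of execution. The paper argues synthetically: it quotes the fact that $A$ is the circumcenter of $\triangle FPQ$, uses the law of sines in that triangle to convert $|FP|/|\cos\angle(\delta,\beta(A))|$ and $|FQ|/|\cos\angle(\alpha(A),\delta)|$ into $2R$, and then applies the power of the point $L$ with respect to the circumcircle to get $|PL|\cdot|QL|=R^2-|AL|^2$. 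You instead set up coordinates $x^2=4py$, parametrize the two tangents through $A$ by the roots $\tau_1,\tau_2$ of $p\tau^2-x_0\tau+y_0=0$, and prove two Vieta identities; I checked both, and they are exactly the coordinate shadows of the paper's synthetic steps --- your identity $(1+\tau_1^2)(1+\tau_2^2)=|FA|^2/p^2$ plays the role of the law-of-sines step together with $R=|FA|$, and $(2p\tau_1-\ell)(2p\tau_2-\ell)=|LA|^2-|FA|^2$ is precisely the signed power of $L$ with respect to the circle centered at $A$ of radius $|AF|$. What your version buys is robustness: the signed computation automatically handles the case where $L$ does not lie between $P$ and $Q$ (where the paper's unsigned formula $|AL|^2=R^2-|PL|\cdot|QL|$ would need an absolute value), at the cost of losing the transparency of the circumcircle picture.
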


\begin{proof} %The following lemma is known (See \cite[Lemma 1.2]{AkopZasl}).
%\begin{lemma}
By a well-known property of a parabola, \emph{the point $A$ is the center of the circle circumscribed about $\triangle FPQ$} (see \cite[Lemma 1.2]{AkopZasl}).
%The reflection of $F$ about each tangent line of the parabola lies on the directrix.
%\end{lemma}
So $|AF|=|AP|=|AQ|=R$, where $R$ is a radius of the circle circumscribed about $\triangle FPQ$.
By the sine theorem we have

$$2 R=\frac{|FP|}{|\sin{\angle{(QF, QP)}}|}=\frac{|FP|}{|\cos{\angle{(\delta,\beta(A))}}|} \textup{ and }
2 R=\frac{|FQ|}{|\sin{\angle{(PQ, PF)}}|}=\frac{|FQ|}{|\cos{\angle{(\alpha(A),\delta)}}|}.$$
%\smallskip
%Analogously, $AP.$
%Since $|AP|=|AQ|=R$, we have
%\begin{equation}
%|AL|^2=R^2-|PL|\cdot |QL|.
%\label{eq4}
%\end{equation}
%\smallskip
By the well-known property of a power of a point with respect to a circle we have
$|AL|^2=R^2-|PL|\cdot|QL|.$
%\label{cevformula}
%\end{lemma}
%\begin{proof} Suppose that $L$ is contained in a segment $PQ$. By cosine formula we have
%$$|AL|^2=|AP|^2+|PL|^2-|AP|\cdot|PL|\cdot\cos{\angle{APQ}}$$
%$$|AL|^2=|AQ|^2+|QL|^2-|AQ|\cdot|QL|\cdot\cos{\angle{AQP}}$$
%Thus $$2|AL|^2=|AP|^2+|AQ|^2+|PL|^2+|QL|^2-|AP|\cdot|PL|\cdot\cos{\angle{APQ}}-|AQ|\cdot|QL|\cdot\cos{\angle{AQP}}=$$
%$$=2R^2+|PL|^2+|QL|^2-|PL+QL|^2=2(R^2-|PL|\cdot|QL|),$$
%where the second equality holds, since $|AP|=|AQ|=R$ and $|AP|\cdot\cos{\angle{APQ}}=|AQ|\cdot\cos{\angle{AQP}}=|PL+QL|/2$.
%If $L$ is not contained in the segment $PQ$ then the proof is analogous.
%\end{proof}
Then

\smallskip
$$s(A) \cdot t(A)=\left(\frac{|PL|\cdot|\cos{\angle{(\alpha(A),\delta)}}|}{|FP|}\right)\cdot\left(\frac{|QL|\cdot|\cos{\angle{(\delta, \beta(A))}}|}{|FQ|}\right)=$$
$$=\frac{|PL|\cdot |QL|}{4 R^2}=\frac{1}{4}\left(1-\left(\frac{|AL|}{|AF|}\right)^2\right)=\mathrm{const},$$
\smallskip
where %the second equality follows from (\ref{eq3}), the third equality follows from Lemma~\ref{cevformula}, and
the fourth equality follows from the geometric characterization of a hyperbolic
pencil of circles (see the proof of Theorem~\ref{newexamples}(b)).
%(\frac{AL}{AP})^2=1-\frac{PL\cdot LQ}{{AP}^2}=1-(\frac{PL\cdot\cos{\angle{(\alpha,\delta)}}}{PF})\cdot(\frac{QL\cdot\cos{\angle{(\beta,\delta)}}}{QF})==\emph{const}.$$ \smallskip

%\smallskip
%$$(\frac{AL}{AP})^2=1-\frac{PL\cdot LQ}{{AP}^2}=1-(\frac{PL\cdot\cos{\angle{(\alpha,\delta)}}}{PF})\cdot(\frac{QL\cdot\cos{\angle{(\beta,\delta)}}}{QF})=s(\alpha) \cdot t(\beta)=\emph{const}.$$ \smallskip
\end{proof}

Consider the map $f$: $U\rightarrow\mathbb{R}^2$ such that for each point $A\in U$

%$$f(A):=(\ln{\{d(F_1,\alpha(A))\}}, -\ln{\{d(F_2, \beta(A))\}}),$$
\smallskip
$$f(A):=(\ln{s(A)}, \ln{t(A)}).$$
\smallskip

Choose a subdomain (still denoted by $U$) in which the differential of $f$ is nonzero. % and for each point $A\in U$
%the curves $\alpha(A)$, $\beta(A)$, and $\omega(A)$ are transversal.
%Choose a subdomain $U$ in which the differential of $f$ is nonzero.
From Lemma~\ref{cl3} it follows  that $f$ maps the intersections of the left tangent lines to $\gamma$, the right tangent lines to $\gamma$, and the circles from the pencil  with $U$ to the segments of the lines  $x=\mathrm{const}$, $y=\mathrm{const}$, and $x+y=\mathrm{const}$. In particular, three transversal curves
$\alpha(A)$, $\beta(A)$, and $\omega(A)$ have transversal $f$-images. So the differential of $f$ in $U$ is nondegenerate. Thus the restriction of the map $f$
to an appropriate subdomain $\Omega\subset U$ is a diffeomorphism. %Let $g$ be an affine map of the plane that takes
%the lines  $x=\mathrm{const}$, $y=\mathrm{const}$ and $x+y=\mathrm{const}$ to the lines parallel to the sides of an equilateral triangle.
%Then the composition $g\circ f$ is
Theorem~\ref{newexamples}(d) is proved.

\smallskip

\emph{Proof of Theorem~\ref{newexamples}(e).} Denote by $\gamma$ the given ellipse. Denote by $F_1$, $F_2$, and $O_{\gamma}$ the foci and the center of $\gamma$.
Consider the elliptic pencil of circles with the vertices $F_1$ and $F_2$.
Consider the Cartesian coordinate system such that $O_{\gamma}$ is the origin and
the line containing the major axis $\lambda$ is the $Ox$-axis.
Without loss of generality assume that the minor axis of $\gamma$ has length $2$.
 %, and the minor semi-axis of $\gamma$ is unit.
%and $\omega_0$ is a unit circle.
%Denote by $\mathcal{F}$ the given family of circles
%doubly tangent to $\gamma$.
%Denote by $\mathcal{P}$ the given pencil of circles.
Let a circle with the center on the major axis $\lambda$ be doubly tangent to $\gamma$.
%such that the center of the circle lies on the major axis $\lambda$. % $\lambda$ of $\gamma$ which is supposed to be ``horizontal''.
%from the family $\mathcal{F}$. Let $T_1$ and $T_2$ be the tangent points.
The line passing through the tangency points separates the circle into two circular arcs:
the ``left'' tangent circular arc and the ``right'' tangent circular arc. % the "right" circular arc  $T_1T_2$ of $\omega$
%Let $\mathcal{F}_1$ and $\mathcal{F}_2$ be the families of the "left" and "right" circular arcs doubly tangent to $\gamma$, respectively.
%For each point $A$ denote by $\alpha(A)$ and $\beta(A)$ the circular arcs belonging to $\mathcal{F}_1$ and $\mathcal{F}_2$, respectively, and passing through $A$. Denote by $\gamma(A)$ the circle belonging to $\mathcal{P}$ and passing through $A$.
%Let $O$ be an arbitrary point such that the arcs $\alpha(O)$, $\beta(O)$, and $\gamma(O)$ intersect transversely.
%$\alpha(O)\in\mathcal{F}_1$ passing through $O$, the arc $\beta(O)\in\mathcal{F}_2$ passing through $O$, and the circle $\gamma(O)\in\mathcal{P}$ passing through $O$ intersect transversely.
%Let $U$ be an appropriate neighborhood of $O$ such that for each point $A\in U$ the arcs $\alpha(A)$, $\beta(A)$, and $\gamma(A)$ intersect transversely.
%Denote by $O_{\alpha}$ the center of the circle containing $\alpha(A)$.
%Denote by $O_{\beta}$ the center of the circle containing $\beta(A)$.
%Obviously, the points $O(\alpha(A))$, $O(\beta(A))$, and $O_{\gamma}$ lie on the minor axis $\lambda$
%of $\gamma$.
%Let
Let $U$ be an appropriate domain such that for each point $A\in U$ the left and the right tangent circular arcs $\alpha(A)$ and $\beta(A)$
to $\gamma$ passing through $A$,
%the right tangent line $\beta(A)$ to the conic $\gamma$ passing through $A$,
and the circle $\omega(A)$ from the pencil passing through $A$ intersect transversely,
the center $O_{\alpha}$ of the circle containing $\alpha(A)$ ``lies to the left''
of $O_{\gamma}$, the center $O_{\beta}$ of the circle containing $\beta(A)$ ``lies to the right'' of $O_{\gamma}$, and also $s(A) > t(A)$, where
$s(A):=|O_{\alpha}O_{\gamma}|$ and $t(A):=|O_{\beta}O_{\gamma}|$; see Figure~\ref{fig12}.

%Let $\gamma$ be a circle from the pencil  such that $\gamma \cap U \ne \varnothing$.

\begin{lemma} For a fixed circle $\omega$ the product $\frac{1-s^{2}(A)}{s^{2}(A)} \cdot \frac{1-t^{2}(A)}{t^{2}(A)}$ does not depend on the point $A\in\omega\cap U$.
\label{st}
\end{lemma}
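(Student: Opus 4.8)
The plan is to introduce coordinates adapted to the ellipse, to describe the circles doubly tangent to $\gamma$ and centered on the major axis by a single clean relation, and then to reduce the asserted product to a quantity that is manifestly constant along the circles of the elliptic pencil. First I would record the consequences of the normalization: since the minor semi-axis equals $1$ and the eccentricity is $\frac{1}{\sqrt{2}}$, we have $b=1$, $c=1$, and $a=\sqrt{2}$, so $F_1=(-1,0)$, $F_2=(1,0)$, and $\gamma$ has equation $\frac{x^2}{2}+y^2=1$.

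The key step is an auxiliary claim: a circle with center $(p,0)$ and radius $r$ is doubly tangent to $\gamma$ if and only if $p^2+r^2=1$. To see this I would substitute $y^2=r^2-(x-p)^2$ into the equation of $\gamma$; the resulting quadratic in $x$ must have a double root, and setting its discriminant to zero gives exactly $p^2+r^2=1$, with the two tangency points located at $x=2p$. This is the one place where the special value of the eccentricity enters: the general bitangency relation has the form $\frac{p^2}{c^2}+\frac{r^2}{b^2}=1$, and it is precisely the condition $b=c$ (equivalently $e=\frac{1}{\sqrt{2}}$) that collapses it to $p^2+r^2=1$.

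Now fix $A=(x,y)\in U$. The two doubly tangent circles through $A$ have centers $p$ satisfying $(x-p)^2+y^2=1-p^2$, i.e. the quadratic $2p^2-2xp+(x^2+y^2-1)=0$; let its roots be $p_\alpha$ (the center $O_\alpha$, lying to the left) and $p_\beta$ (the center $O_\beta$, lying to the right). Then $s^2=p_\alpha^2$ and $t^2=p_\beta^2$, and by the bitangency relation $1-s^2=r_\alpha^2$, $1-t^2=r_\beta^2$, so the product in question equals $r_\alpha^2 r_\beta^2/(p_\alpha^2 p_\beta^2)$. Vieta's formulas give $p_\alpha+p_\beta=x$ and $p_\alpha p_\beta=\frac{x^2+y^2-1}{2}$, whence $p_\alpha^2+p_\beta^2=1-y^2$, and a short computation yields
$$\frac{1-s^2}{s^2}\cdot\frac{1-t^2}{t^2}=\frac{(1-p_\alpha^2)(1-p_\beta^2)}{p_\alpha^2 p_\beta^2}=1+\frac{4y^2}{(x^2+y^2-1)^2}.$$

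Finally I would identify the pencil. Every circle of the elliptic pencil with vertices $F_1$ and $F_2$ passes through $(\pm 1,0)$, so it has equation $x^2+y^2-1=-Ey$ for some constant $E$; hence along such a circle $\frac{y}{x^2+y^2-1}$ is constant, and therefore so is $1+\frac{4y^2}{(x^2+y^2-1)^2}$. This proves the lemma. I expect the main obstacle to be the bitangency computation together with the observation that the eccentricity $\frac{1}{\sqrt{2}}$ is exactly what reduces the relation to $p^2+r^2=1$; everything after that is a Vieta-driven simplification, the only care being the bookkeeping of which root is the ``left'' center and which is the ``right'' center.
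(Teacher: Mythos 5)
Your proof is correct, but after the first step it follows a genuinely different route from the paper. Both arguments start by exploiting the normalization $b=c=1$ to establish the same key relation: a circle centered at $(p,0)$ on the major axis is doubly tangent to $\gamma$ exactly when $p^2+r^2=1$. The paper proves this (its Lemma on the points $P$ and $Q$) by an envelope computation for the family $(x-a)^2+y^2=1-a^2$, while you get it from a discriminant condition; these are equivalent, and your side remark that $e=\frac{1}{\sqrt{2}}$ is precisely what collapses the general relation $\frac{p^2}{c^2}+\frac{r^2}{b^2}=1$ to $p^2+r^2=1$ is a nice explanation of where the hypothesis enters. From there the paper stays synthetic: it introduces the foot $R$ of the perpendicular from $A$ to the major axis, computes $|F_1A|^2=2(1+(t-s)-st)$ and $|F_2A|^2=2(1+(s-t)-st)$, and then combines the law of cosines with the inscribed-angle theorem (the angle $\angle F_1AF_2$ is constant along a circle through $F_1$ and $F_2$) to identify the product as $\frac{4}{\cos^2\angle F_1AF_2}$. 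You instead observe that the two centers $-s$ and $t$ are the roots of the quadratic $2p^2-2xp+(x^2+y^2-1)=0$, apply Vieta to get the closed form $1+\frac{4y^2}{(x^2+y^2-1)^2}$, and finish with the algebraic equation $x^2+y^2-1=-Ey$ of the pencil members. Your version buys a shorter, purely mechanical computation with the constant explicit in the pencil parameter $E$, and it sidesteps the trigonometric bookkeeping (where, incidentally, the paper's displayed cosine formula has a sign slip that does not affect the conclusion); the paper's version buys geometric insight, since the constancy is traced directly to the inscribed-angle characterization of the elliptic pencil, in parallel with the distance-ratio characterizations used in its proofs of parts (b) and (d). The only point to watch in your write-up is that the ``if and only if'' in your auxiliary claim concerns real bitangency only for $|p|\le\frac{1}{\sqrt{2}}$, but since you only use the forward implication for the genuinely bitangent circles $\alpha(A)$, $\beta(A)$ through points of $U$, this does not affect the argument.
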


\begin{proof} %In what follows we use notations $s$ and $t$ instead of $s(A)$ and $t(A)$, respectively. And we use notations $O_1$ and $O_2$
%instead of $O_1(\alpha(A))$ and $O_2(\beta(A))$.
Let $P$ and $Q$ be points on the circles containing $\alpha(A)$ and $\beta(A)$, respectively, such that $O_{\alpha}P$ and $O_{\beta}Q$ are perpendicular to $\lambda$. % and $_{\beta}(A)Q$ is perpendicular to $\lambda$.
Consider the circle $\omega_0$ such that the minor axis of the ellipse $\gamma$ is a diameter of $\omega_0$.
Since the eccentricity of $\gamma$ is equal to $\frac{1}{\sqrt{2}}$ we get that the foci $F_1$ and $F_2$ lie on $\omega_0$.

\begin{figure}[hb]
\centering
\begin{overpic}[width=0.37\textwidth]{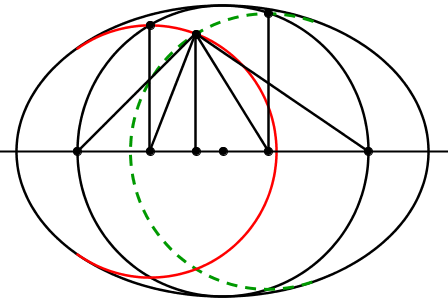}
\put(58, 4) {\color{white}\vrule width .3cm height .3cm}
%\put(58,4) {$\gamma$}
\put(15,25) {\color{white}\vrule width .3cm height .4cm}
\put(15,26) {$F_1$}
\put(80,25) {\color{white}\vrule width .3cm height .4cm}
\put(80,26) {$F_2$}
\put(46, 60) {\color{white}\vrule width .3cm height .25cm}
\put(46,60) {$A$}
\put(27,56) {\color{white}\vrule width .3cm height .33cm}
\put(27,56) {$P$}
\put(61,58) {\color{white}\vrule width .3cm height .37cm}
\put(61,59) {$Q$}
\put(6, 30) {\color{white}\vrule width .25cm height .35cm}
\put(6,32) {$\lambda$}
%\put(55,25) {$L$}
\put(53, 18) {\color{white}\vrule width .6cm height .5cm}
\put(53,20) {$\alpha(A)$}
\put(25, 18) {\color{white}\vrule width .6cm height .5cm}
\put(25,20) {$\beta(A)$}
%\put(25, 18) {\color{white}\vrule width .6cm height .5cm}
\put(30,28) {$O_{\alpha}$}
\put(56,27) {\color{white}\vrule width .4cm height .35cm}
\put(56,28) {$O_{\beta}$}
\put(48,36) {$O_{\gamma}$}
\put(40,28) {$R$}
\put(84,12) {\color{white}\vrule width .4cm height .3cm}
\put(85,12) {$\gamma$}
\put(72,11) {\color{white}\vrule width .4cm height .35cm}
\put(73,12) {$\omega_0$}
\end{overpic}
\caption{To the proof of Lemma~\ref{st}.}
\label{fig12}
\end{figure}

\begin{lemma} The points $P$ and $Q$ lie on the circle $\omega_0$.
 %Let $\omega$ be a circle. Let $\lambda$ be a diameter of $\omega$.
%Let $X$ be a point on $\lambda$. Let $Y$ be a point on $\omega_0$ such that $XY$ is
%perpendicular to $\lambda$. Let $\omega$ be the circle with center $X$ passing through $Y$.
%Then $\omega$ is doubly tangent to $\gamma$.
\label{cl4}
\end{lemma}

\begin{proof}
We are going to prove that for each point $Y\in\omega_0$ the circle $\omega$ with the center at the projection $X$
of $Y$ onto $\lambda$ and the radius $|XY|$ is doubly tangent to $\gamma$.
%Without loss of generality assume that the minor semi-axis of $\gamma$ has unit length.
Let $(a,0)$ be coordinates of $X$.
%Let $Y$ be a point on $\omega_0$ such that $XY$ is perpendicular to $\lambda$.
Then $|XY|=\sqrt{1-a^2}$.
So the circle $\omega$ %centered at $X$ and passing through $Y$
has the equation of the form $(x-a)^2+y^2=1-a^2$. We can rewrite this equation in the form
$g(x,y,a)=0$, where $g(x,y,a)=(x-a)^2+y^2+a^2-1$. Let us compute the envelope of this family of circles. It is well-known that the equation of such envelope
%the envelope
%of the family $g(x,y,t)=0$
can be obtained from the following system

$$\textup{ (1) } g(x,y,a)=0 \textup{ and } \textup{ (2) } \frac{\partial }{\partial a}g(x,y,a)=0$$
\smallskip
by eliminating $a$. We have $\frac{\partial }{\partial a}g(x,y,a)=-2x+4a$. By $(2)$ we have $a=x/2$. Substituting $a=x/2$ in $(1)$, we have
the envelope equation $\frac{x^2}{2}+y^2=1$. This is the equation of $\gamma$. Thus $\omega$ is tangent to $\gamma$.
Since the center of  $\omega$ lies on the major axis, $\omega$ is doubly tangent to $\gamma$.
%Setting $Y=P$ and $Y=Q$ we obtain that the points $P$ and $Q$ lie on the circle $\omega_0$.
\end{proof}

Let us continue the proof of Lemma~\ref{st}.
Further we denote $s(A)$ and $t(A)$ simply by $s$ and $t$.

From Lemma~\ref{cl4} it follows that $|AO_{\alpha}|=|PO_{\alpha}|=\sqrt{1-s^{2}}$ and $|AO_{\beta}|=|QO_{\beta}|=\sqrt{1-t^{2}}$.
Let $R$ be the projection of $A$ onto $\lambda$. Then $|RO_{\alpha}|^2-|RO_{\beta}|^2=|AO_{\alpha}|^2-|AO_{\beta}|^2=t^2-s^2$.
Since $|O_{\alpha}O_{\beta}|=s+t$ we have $|RO_{\alpha}|=t$ and $|RO_{\beta}|=s$. %Hence, $|RO_{\gamma}|$=$|s-t|$ and
Hence $|AR|=\sqrt{O_{\alpha}A^2-O_{\alpha}R^2}=\sqrt{O_{\alpha}P^2-O_{\alpha}R^2}=\sqrt{1-s^2-t^2}$,
$|F_1R|=1-s+t$, and $|F_2R|=1+s-t$. Thus $|F_1A|^2=1-s^2-t^2+(1-s+t)^2=2(1+(t-s)-st)$ and $|F_2A|^2=1-s^2-t^2+(1+s-t)^2=2(1+(s-t)-st)$.

For each point $A\in\omega\cap U$ we have $\angle{F_1AF_2}=\mathrm{const}$.
By the cosine theorem we have

$$\cos{\angle{F_1AF_2}}=\frac{|F_1F_2|^2-|F_1A|^2-|F_2A|^2}{2|F_1A|\cdot |F_2A|}=\frac{4-2(1+(t-s)-st)-2(1+(s-t)-st)}{\sqrt{4(1+(t-s)-st)\cdot (1+(s-t)-st)}}=$$
%$$=\frac{2st}{\sqrt{1-s^2-t^2+(st)^2}}
$$=\frac{2st}{\sqrt{(1-s^2)(1-t^2)}}.$$
\smallskip

Thus $\frac{1-s^{2}(A)}{s^{2}(A)} \cdot \frac{1-t^{2}(A)}{t^{2}(A)}=\frac{4}{\cos^2{\angle{F_1AF_2}}}=\mathrm{const}$. Lemma~\ref{st} is proved.

Consider the map $f$: $U\rightarrow\mathbb{R}^2$ such that for each point $A\in U$

%$$f(A):=(\ln{\{d(F_1,\alpha(A))\}}, -\ln{\{d(F_2, \beta(A))\}}),$$
\smallskip
$$f(A):=\left(\ln{\frac{1-s^{2}(A)}{s^{2}(A)}}, \ln{\frac{1-t^{2}(A)}{t^{2}(A)}}\right).$$
\smallskip

Choose a subdomain (still denoted by $U$) in which the differential of $f$ is nonzero. % and for each point $A\in U$
%the curves $\alpha(A)$, $\beta(A)$, and $\omega(A)$ are transversal.
%Choose a subdomain $U$ in which the differential of $f$ is nonzero.
From Lemma~\ref{st} it follows  that $f$ maps the intersections of the left tangent circular arcs to $\gamma$, the right tangent circular arcs to $\gamma$, and the circles from the pencil with $U$ to the segments of the lines  $x=\mathrm{const}$, $y=\mathrm{const}$, and $x+y=\mathrm{const}$. In particular, three transversal curves
$\alpha(A)$, $\beta(A)$, and $\omega(A)$ have transversal $f$-images. So the differential of $f$ in $U$ is nondegenerate. Thus the restriction of the map $f$
to an appropriate subdomain $\Omega\subset U$ is a diffeomorphism. %Let $g$ be an affine map of the plane that takes
%the lines  $x=\mathrm{const}$, $y=\mathrm{const}$ and $x+y=\mathrm{const}$ to the lines parallel to the sides of an equilateral triangle.
%Then the composition $g\circ f$ is
Theorem~\ref{newexamples}(e) is proved.
%From Lemma~\ref{st} it follows  that $f$ maps the intersections of curves from the families $\mathcal{F}_1$, $\mathcal{F}_2$, and $\mathcal{P}$ with $U$ to the segments of the lines  $x=\mathrm{const}$, $y=\mathrm{const}$ and $x+y=\mathrm{const}$. In particular two transversal curves
%$\alpha(O)$ and $\beta(O)$ have transversal $f$-images. (So the differential of $f$ at $O$ is nondegenerate.) Thus the restriction of the map $f$
%to an appropriate domain $\Omega\subset U$ containing $O$ is a diffeomorphism. Theorem~\ref{newexamples}(e) is proved.
\end{proof}

%(f) Let us prove the following lemma.
%\begin{lemma} Three circles with parameters $t_1$, $t_2$, and $t_3$ have a common point if and only if $2t^{2}_{1}t_{2}^{2}t_{3}^{2}+t_{1}^{2}t_{2}^{2}+t_{2}^{2}t_{3}^{2}+t_{3}^{2}t_{1}^{2}+t_{1}^{2}+t_{2}^{2}+t_{3}^{2}=0$.
%\end{lemma}
%\begin{proof}
%\end{proof}
%So the equation of the given $3$-web is a polynomial of degree $1$ in each variable. Thus the curvature of the $3$-web
%is $0$ (See \cite{Blaschke}) and this web is hexagonal.

\begin{proof}
[Proof that Theorem~\ref{moebius} is equivalent to the Wunderlich Theorem 2.3.]
It is well-known that \emph{any one-parameter group of M\"obius transformations is conjugate to a one-parameter group of either dilations, or rotations, or translations, or loxodromic transformation.}

The orbits of a one-parameter group of loxodromic transformations are not circular arcs. Thus
it suffices to consider the following three cases.

Case 1. \emph{$\mathcal{M}_t$ is conjugate to a one-parameter group of dilations.} The orbits of the group
are arcs of circles belonging to one elliptic pencil. It's easy to verify that there are the following
three possibilities: 1) there are two distinct (possibly complex) circles from the pencil
tangent to  $\omega_i$; 2)  $\omega_i$ passes through a vertex of the pencil; 3)  $\omega_i$ belongs to a hyperbolic pencil with limiting points at
the vertices of the pencil.
If a circle $\omega_i$
passes through a vertex of the pencil then the images of $\omega_i$  are circles belonging to the parabolic pencil
with the same vertex. If $\omega_i$ belongs to a hyperbolic pencil with limiting points at
the vertices of the pencil then the images of
$\omega_i$ are circles belonging to the hyperbolic pencil. Overwise the images of
$\omega_i$ are circles tangent to two distinct (possibly complex) fixed circles from the pencil.

Case 2. \emph{$\mathcal{M}_t$ is conjugate to a one-parameter group of rotations.}
The orbits of the group are arcs of circles belonging to one hyperbolic pencil. %If a circle $\omega_i$
%passes through a vertex of the pencil, then the images of $\omega_i$  are circles belonging to the parabolic pencil
%with the same vertex. If $\omega_i$ does not pass through vertices of the pencil,
%then
The images of $\omega_i$ are circles tangent to two distinct (possibly null) fixed circles from the pencil.

Case 3. \emph{$\mathcal{M}_t$ is conjugate to a one-parameter group of translations.}
The orbits of the group are arcs of circles belonging to one parabolic pencil.
If a circle $\omega_i$ passes through the vertex of the pencil then the images of $\omega_i$  are circles belonging to the parabolic pencil with the same vertex. If $\omega_i$ does not pass through a vertex of the pencil then the images of
$\omega_i$ are circles tangent to two distinct fixed circles from the pencil.
% given one-parameter group is conjugate to one-parameter group of either dilations, or rotations, or translations.

%\begin{proof}
%Let $e$ be the identity element of the given one-parameter group $\mathcal{M}$
%of M\"obius transformations.
%Consider an arbitrary nonidentity element $\mu$ of $\mathcal{M}$.
%From the classification of M\"obius transformations it follows that there is a M\"obius transformation $g$ such that $\mu=g\tilde{\mu} g^{-1}$, where $\tilde{\mu}$ is
%either dilation, or rotation, or translation, or loxodromic transformation (the last one is a composition of rotation and dilation). Consider the one-parameter group $\tilde{\mathcal{M}}$ of either dilations, or rotations, or translations, or loxodromic transformations containing $\tilde{\mu}$. Then $\mathcal{M}=g\tilde{\mathcal{M}} g^{-1}$. If $\tilde{\mathcal{M}}$ is a one-parameter group of either dilations, or rotations, or translations then the
%orbits of $\mathcal{M}$ are circular arcs. But if $\tilde{\mathcal{M}}$ is a one-parameter group of loxodromic transformations then the
%orbits of $\mathcal{M}$ are logarithmic spirals. The proposition is proved.
%\end{proof}
\end{proof}

\smallskip

\section{Open problems}\label{sec:open}

The following open problems may be a good warm-up before attacking the Blaschke--Bol Problem.

\begin{problem} \emph{Web Transformation Problem.} Prove that in Theorem~\ref{newexamples}(b) the hyperbolic pencil can be replaced by the elliptic one with the vertices at the limiting points of the initial pencil.
Analogous replacement is not possible for Theorem~\ref{newexamples}(d).
%Can examples of Theorem~\ref{newexamples} be obtained as limiting cases of webs from circular arcs doubly tangent to a nonsingular cyclics?
 %the The family of tangent lines to a general conic and an elliptic pencil of circles with the vertices at the foci of the conic.
\label{pr1}
\end{problem}

\begin{figure}[htbp]
%\includegraphics[width=0.18\textwidth]{Figure1A.png}
%\qquad
\includegraphics[width=0.23\textwidth]{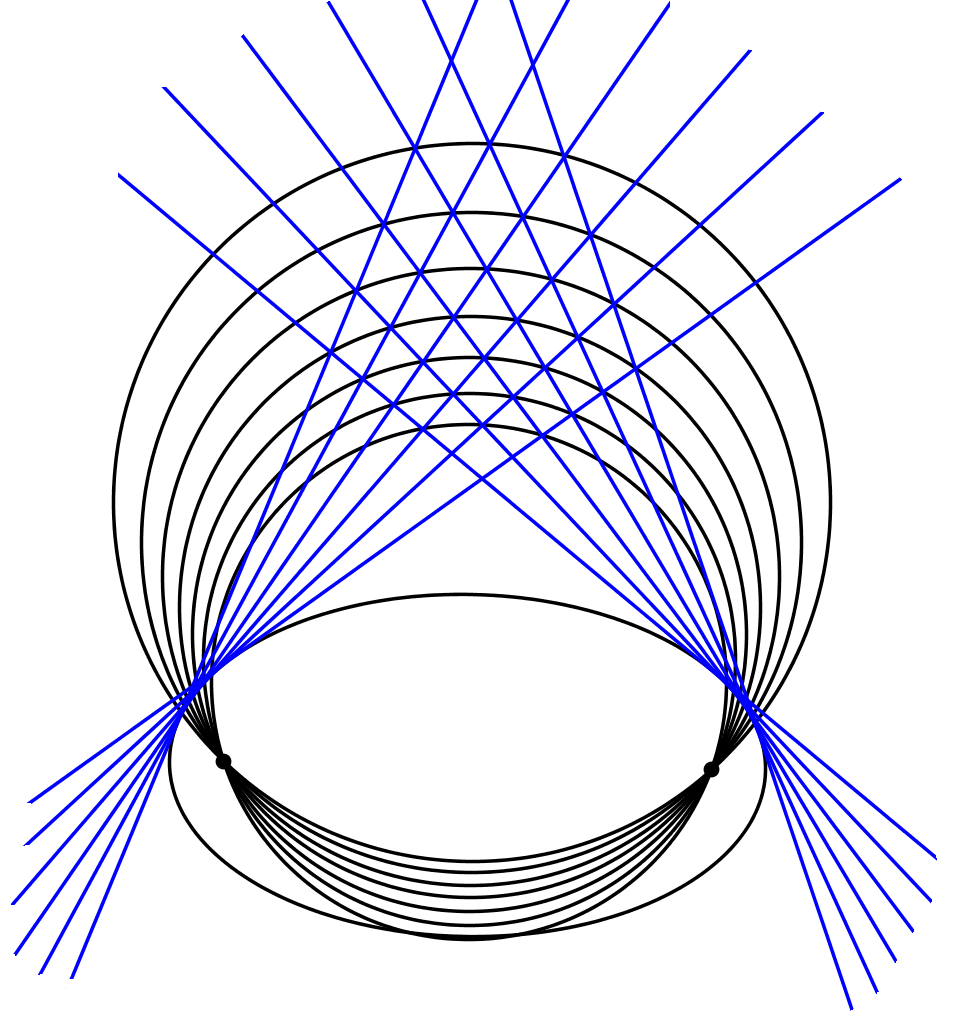}
\caption{To Problem~\ref{pr1}.}
\label{fig13}
\end{figure}

%\newpage

A \emph{cubic series} is a family of circles $a(t)(x^2+y^2)+b(t)x+c(t)y+d(t)=0$, where $a(t)$, $b(t)$, $c(t)$, and $d(t)$ are polynomials of degree $\le 3$.

\begin{problem} \emph{Cubic Series.} Prove that the set of circles $(1-t^3)(x^2+y^2)+2(1+t)x+2(t^2+t^3)y-1-t^3=0$, where $t\in\mathbb{R}$, counted triply, contains a web. Find all cubic series that contain a web.
\label{cubic}
\end{problem}

It is easy to see that examples of webs in Problem~\ref{pr1} and Problem~\ref{cubic} are not particular cases of the examples considered in $\S 1,2$.

\begin{problem} \emph{Complement Problem.} \textup{(A.A. Zaslavsky and I.I. Bogdanov, private communication)}
%Let $(x,y)$ be Cartesian coordinates in the plane.
Given two sets of lines
%(a) the two families of lines
$x=\mathrm{const}$ and $y=\mathrm{const}$,
%(b) two arbitrary pencils of lines
find all sets of circles which together with them contain a web.
%with a third family of circles to obtain a web (provide as many examples of third family as possible; try to find all the examples and prove that there are no other ones):
%\begin{pr}
%(1) Lines parallel to the $Ox$ axis; (2) parallel to the $Oy$ axis.
%Let a family of lines passing through the origin and a family of lines parallel to $Ox$ are given. Find a family of circles (not lines!) such that all three families form a web.
%\end{pr}
%\begin{pr}
%(1) Lines parallel to the $Ox$ axis; (2) passing through the origin $O$.
%\end{pr}
\end{problem}

Our last problem concerns $3$-dimensional isotropic geometry; see references in \cite{Pottmann}.
An \emph{isotropic circle} is either a parabola
with the axis parallel to $Oz$ or an ellipse whose projection onto the plane $Oxy$ is a circle.
An \emph{isotropic plane} is a plane parallel to $Oz$.
An \emph{isotropic sphere} is a paraboloid of revolution with the axis parallel to $Oz$.

%An isotropic
%cyclide is given by equation (1), in which both instances of (x2 + y2 + z2) are replaced by (x2 + y2)

\begin{problem} \emph{Webs of isotropic circles.} Find all webs of isotropic circles in the isotropic plane and on all surfaces except planes and isotropic spheres; see Figure~\ref{fig14} to the right.
\label{pr2}
\end{problem}

%\begin{figure}[hb]
%\includegraphics[width=0.24\textwidth]{paralel.png}
%\qquad

%\caption{}
%\label{fig6}
%\end{figure}

\section{Acknowledgements}\label{sec:acknowledgements}

The author is very grateful to M.~Skopenkov for the motivation and constant attention to this work,
and to O.~Karpenkov, A.~Shelekhov, S.~Tabachnikov, V.~Timorin for valuable remarks. % {\bf Write it!!!}

%Find the boundary of the webs from Theorem.

%Problem on complement.

%Obviously, each surface containing $3$-web from circles carries $\ge 2$ families of circles. A problem of the classification of all surfaces which carry $\ge 2$ families of circles is still open. In general, each cyclide carries $\ge 2$ families of circles. There is a surface obtained by parallel translation of a circle along another one which carries exactly $2$ families of circles but this surface is not a
%\emph{Darboux cyclide}. It is conjectured \cite{Pottmann} that a surface which carries three families of circles is a \emph{Darboux cyclide}.
%See Figure~\ref{fig4} to the left; In \cite{Nilov} there are some particular results on the classification of surfaces containing several circles through each point.

%\newpage

%\begin{figure}[hb]
%\includegraphics[width=0.24\textwidth]{paralel.png}

%\caption{A surface obtained by parallel translation of a circle along another one which carries only $2$ families of circles but is not a
%\emph{Darboux cyclide}.}
%\label{fig4}
%\end{figure}

%{\small Let $Oxy$ be a Cartesian coordinate system in the plane $\mathbb{R}^2$.
%The three families of smooth curves $\mathcal{F}_i$
%$$u_i(x,y)=u_i=\mathrm{const}$$
\end{document}